\newtheorem{theorem}{Theorem}[section]
\newtheorem{lemma}[theorem]{Lemma}
\newtheorem{corollary}[theorem]{Corollary}
\theoremstyle{definition}
\newtheorem{definition}[theorem]{Definition} 
\newtheorem{example}[theorem]{Example} 
\theoremstyle{remark}
\numberwithin{equation}{section}
\newcommand{\ot}{\otimes}
\newcommand{\ra}{\rightarrow}
\newcommand{\BC}{\mathbb{C}}
\newcommand{\IIDiag}{\raisebox{-0.33\height}{\includegraphics[scale=0.25]{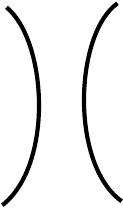}}}
\newcommand{\XDiag}{\raisebox{-0.33\height}{\includegraphics[scale=0.25]{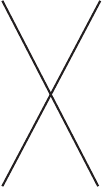}}}
\newcommand{\PMEdgeDiag}{\raisebox{-0.33\height}{\includegraphics[scale=0.25]{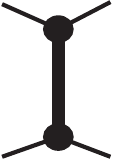}}}
\newcommand{\SquareVirtual}{\raisebox{-0.33\height}{\includegraphics[scale=0.25]{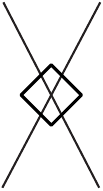}}}
\newcommand{\NodeVirtual}{\raisebox{-0.33\height}{\includegraphics[scale=0.25]{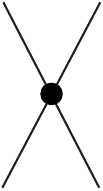}}}
\newcommand{\DoubleTheta}{\raisebox{-0.33\height}{\includegraphics[scale=0.5]{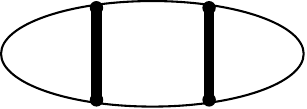}}}
\newcommand{\DoubleThetaZero}{\raisebox{-0.33\height}{\includegraphics[scale=0.5]{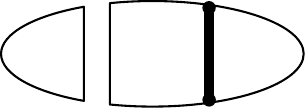}}}
\newcommand{\DoubleThetaOne}{\raisebox{-0.33\height}{\includegraphics[scale=0.5]{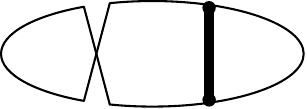}}}
\newcommand{\ThetaG}{\raisebox{-0.33\height}{\includegraphics[scale=0.5]{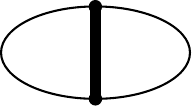}}}
\newcommand{\ThetaZero}{\raisebox{-0.33\height}{\includegraphics[scale=0.5]{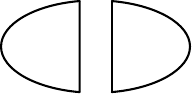}}}
\newcommand{\ThetaOne}{\raisebox{-0.33\height}{\includegraphics[scale=0.5]{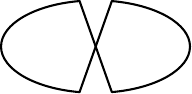}}}
\newcommand{\ThetaZeroSquiggle}{\raisebox{-0.33\height}{\includegraphics[scale=0.5]{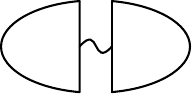}}}
\newcommand{\ThetaOneSquiggle}{\raisebox{-0.33\height}{\includegraphics[scale=0.5]{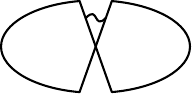}}}
\newcommand{\DoubleThetaB}{\raisebox{-0.33\height}{\includegraphics[scale=0.5]{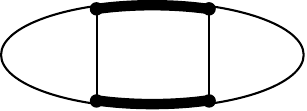}}}
\newcommand{\DoubleThetaBOne}{\raisebox{-0.33\height}{\includegraphics[scale=0.5]{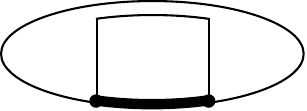}}}
\newcommand{\DoubleThetaBTwo}{\raisebox{-0.33\height}{\includegraphics[scale=0.5]{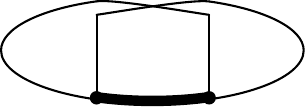}}}
\newcommand{\DoubleThetaBThree}{\raisebox{-0.33\height}{\includegraphics[scale=0.5]{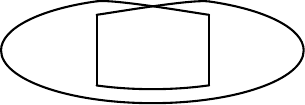}}}
\newcommand{\DoubleThetaBFour}{\raisebox{-0.33\height}{\includegraphics[scale=0.5]{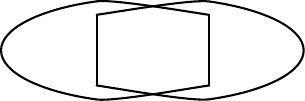}}}
\newcommand{\KThreeThree}{\raisebox{-0.4\height}{\includegraphics[scale=0.35]{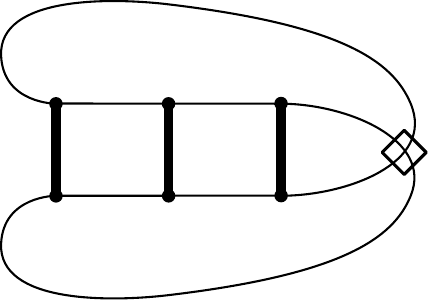}}}
\newcommand{\KThreeThreeOne}{\raisebox{-0.4\height}{\includegraphics[scale=0.35]{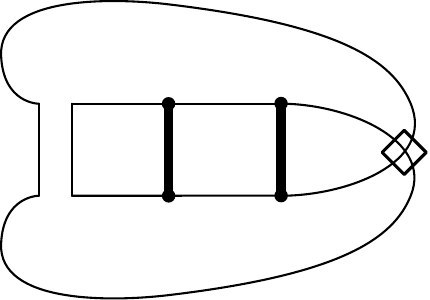}}}
\newcommand{\KThreeThreeTwo}{\raisebox{-0.4\height}{\includegraphics[scale=0.35]{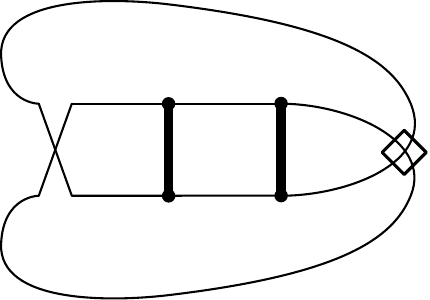}}}
\newcommand{\KThreeThreeThree}{\raisebox{-0.4\height}{\includegraphics[scale=0.35]{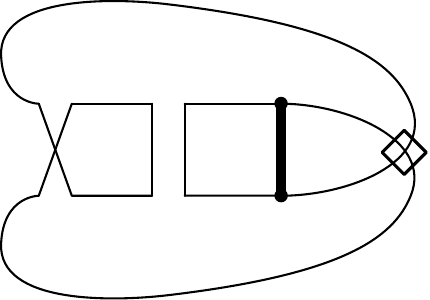}}}
\newcommand{\KThreeThreeFour}{\raisebox{-0.4\height}{\includegraphics[scale=0.35]{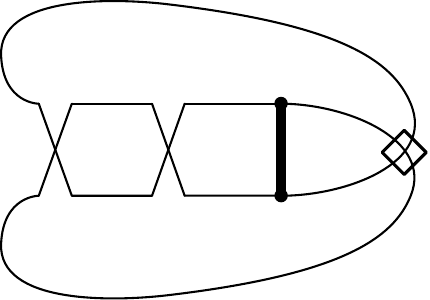}}}
\newcommand{\KThreeThreeFive}{\raisebox{-0.4\height}{\includegraphics[scale=0.35]{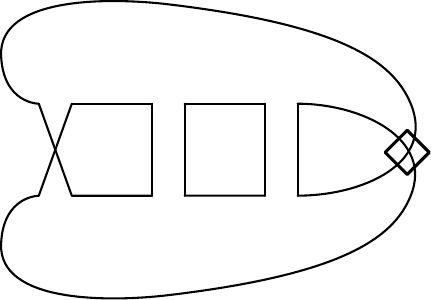}}}
\newcommand{\KThreeThreeSix}{\raisebox{-0.4\height}{\includegraphics[scale=0.35]{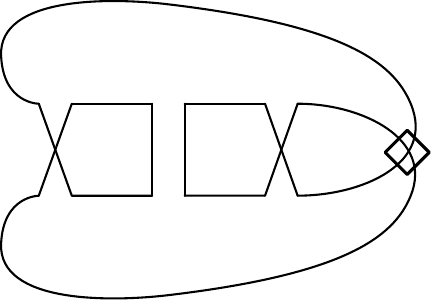}}}
\newcommand{\KThreeThreeSeven}{\raisebox{-0.4\height}{\includegraphics[scale=0.35]{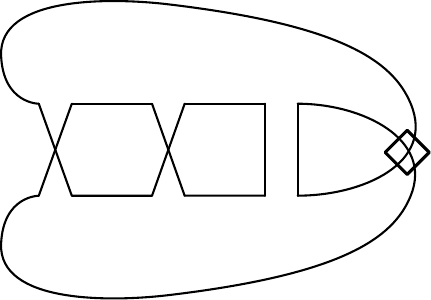}}}
\newcommand{\KThreeThreeEight}{\raisebox{-0.4\height}{\includegraphics[scale=0.35]{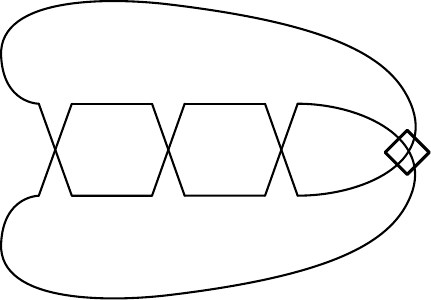}}}
\newcommand{\BN}{\mathbb{N}}
\newcommand{\NN}{{\mathbb N}}
 \newcommand{\CGlyph}{\raisebox{-0.25\height}{\includegraphics[width=0.5cm]{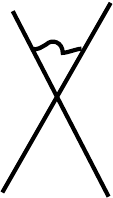}}}
\newcommand{\VGlyph}{\raisebox{-0.25\height}{\includegraphics[width=0.5cm]{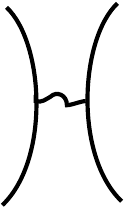}}}
\newcommand{\YMTens}{\raisebox{-0.25\height}{\includegraphics[width=0.5cm]{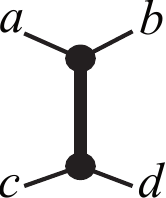}}}
\newcommand{\IMTens}{\raisebox{-0.25\height}{\includegraphics[width=0.5cm]{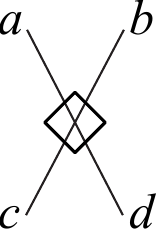}}}
\newcommand{\BigYMTens}{\raisebox{-0.4\height}{\includegraphics[width=1cm]{LabeledPMEdge.pdf}}}
\newcommand{\BigIMTens}{\raisebox{-0.4\height}{\includegraphics[width=1cm]{Square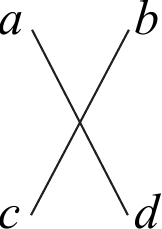}}}
\newcommand{\CTens}{\raisebox{-0.25\height}{\includegraphics[width=0.5cm]{VirtualCrossLabeled.pdf}}}
\newcommand{\FTens}{\raisebox{-0.25\height}{\includegraphics[width=0.5cm]{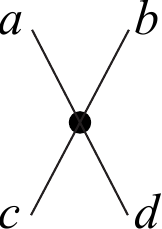}}}
\def\del{\partial}
\def\del{\partial}
 \def\l@subsection{\@tocline{2}{0pt}{4pc}{6pc}{}}
\def\l@subsubsection{\@tocline{3}{0pt}{8pc}{8pc}{}}
\begin{document}

\title{A State Sum for the Total Face Color Polynomial}

\thanks{}

\author{Scott Baldridge}
\address{Department of Mathematics, Louisiana State University,
Baton Rouge, LA}
\email{baldridge@math.lsu.edu}

\author{Louis H. Kauffman}
\address{Department of Mathematics, Statistics and Computer Science, 851 South Morgan Street, University of Illinois at Chicago,
Chicago, Illinois 60607-7045} 

\author{Ben McCarty}
\address{Department of Mathematical Sciences, University of Memphis,
Memphis, TN}
\email{ben.mccarty@memphis.edu}

\subjclass{}
\date{}

\begin{abstract} 
The total face color polynomial is based upon the Poincar\'{e} polynomials of a family of filtered $n$-color homologies.  It counts the number of $n$-face colorings of ribbon graphs for each positive integer $n$.  As such, it may be seen as a successor of the Penrose polynomial, which at $n=3$ counts $3$-edge colorings (and consequently $4$-face colorings) of planar trivalent graphs.  In this paper we describe a state sum formula for the polynomial.  This formula unites two different perspectives about graph coloring:  one based upon topological quantum field theory and the other on diagrammatic tensors.
\end{abstract}

\maketitle

\section{Introduction}

The $2$-variable total face color polynomial of a ribbon graph $\Gamma$, $T(\Gamma,n,t)$, was introduced in 2023 \cite{BM-Color} as the Poincar\'{e} polynomial in $t$ of the filtered $n$-color homology, which exists at the top level of a robust family of homology theories for trivalent ribbon graphs.  To tell that story, we start at the base of that superstructure.  

We begin with a {\em perfect matching graph} $\Gamma_M$, which is a ribbon graph $\Gamma$ of a trivalent graph $G(V,E)$ together with a perfect matching $M\subset E$.  Here a {\em ribbon graph} $\Gamma$ is thought of as the closure of a small neighborhood of the $1$-skeleton $G$ of a CW complex of a closed surface $\overline{\Gamma}$ together with the $1$-skeleton  (cf. \Cref{sec:ribbonGraphs} for details).  In \cite{BM-Color}, the first and third authors create a state system based on the recursive relations of the Penrose polynomial:
\begin{eqnarray*}
\left[ \PMEdgeDiag \right]_n & = & \left[ \IIDiag  \right]_n  - \left[ \XDiag \right]_n\\
\left[ \bigcirc \right]_n & = & n.
\end{eqnarray*}
This state system is then used to develop a spectral sequence whose $E_1$ page is a nontrivial bigraded homology theory, analogous Khovanov homology \cite{Kho}, and whose Euler characteristic is the evaluation of the Penrose polynomial at $n$ \cite{BM-Color}.  

The $E_\infty$ page of that spectral sequence is a filtered homology theory, analogous to Lee homology \cite{LeeHomo}, which after an appropriate change of basis (cf. \Cref{def:colorbasis}) is seen to be generated by proper face colorings of certain ribbon graphs with $n$ colors.  By taking the Poincar\'{e} polynomial of the filtered theory, instead of the Euler characteristic, one obtains a stronger invariant, $T(\Gamma_M,n,t)$ (cf. \Cref{definition:totalfacecolorpolynomial}).  The total face color polynomial derives its name from the fact that for any trivalent graph $G$ with $Aut(G) = 1$, after evaluating at $t=1$, the polynomial counts the number of distinct face colorings of all possible ribbon graphs for $G$ with $n$ colors (cf. Section 7-8 of \cite{BM-Color}).  

Because of this fact, define $T(\Gamma_M,n) :=T(\Gamma_M,n,1)$, and call this the total face color polynomial of $\Gamma_M$.  This polynomial is the sum of the Betti numbers for each $n$ and is therefore one of the simplest invariants one can obtain from this family of homologies.  It is shown to be a natural successor to the Penrose polynomial in that it is equal to the Penrose polynomial for planar graphs (see Theorem F of \cite{BM-Color} for example).   Unfortunately, unlike the Euler characteristic of a homology, the Poincar\'{e} polynomial on which this polynomial is based requires computation of the entire homology and cannot be computed at the chain level.  The first and third author computed the polynomial for numerous examples, each of which required computing the homology for multiple values of $n$ to get enough data points to compute the polynomial.  For example, Theorem 7.9 of \cite{BM-Color} , it was  shown that computation of the polynomial required computation of as many as $e+f-1$ different filtered $n$-color homologies of a graph $G$ where $e$ is the number of edges and $f$ is the number of faces.

The total face color polynomial and its relation to the Penrose polynomial depends upon the underlying topological quantum field theory (TQFT), spectral sequences, bigraded homology, and more.  However, despite the richness of the theory on which it sits, the total face color polynomial leads to an abstract graph invariant for trivalent graphs.  In particular, if $G$ is trivalent, then we define $T(G,n)$ to be the total face color polynomial of the blowup of a ribbon graph of $G$ (see the paragraph after \Cref{definition:totalfacecolorpolynomial} for why it is an abstract graph invariant). Like other abstract graph invariants (e.g. Tutte polynomial, etc.), there was some hope that a state sum or deletion-contraction formula could be found to compute it.  Remarkably, we will show that this is provided by the Penrose-Kauffman bracket, which we discuss next.  

The Penrose polynomial naturally extends to nonplanar ribbon graphs, but despite a great deal of study in the literature (for example \cite{Aigner, EMM, Jaeger,Martin}), one problem persisted:  the polynomial does not necessarily count the number of $3$-edge colorings at $n=3$ when the graph is not planar (cf. \Cref{ex:K33}).  In 2015, the second author \cite{Kauffman} created a state sum, which is equivalent to the Penrose polynomial (cf. \Cref{defn:penrose_poly}) evaluated at $n=3$, when the graph is planar.  His bracket took the Penrose relations above and incorporated one additional relation involving only the virtual crossings of the original ribbon graph, which are marked with a square,   
\begin{eqnarray*}
\left\llbracket \SquareVirtual \right\rrbracket & = & 2 \left\llbracket \NodeVirtual  \right\rrbracket  - \left\llbracket \XDiag \right\rrbracket.
\end{eqnarray*}
The second author showed that the modified bracket correctly computes the number of $3$-edge colorings of nonplanar graphs, thus generalizing Penrose's result for planar graphs. The Penrose-Kauffman bracket extends to a polynomial for each positive integer, $n\in \NN$ (which is recorded for the first time in this paper in \Cref{defn:PenroseKauffman}), but its proper interpretation for values of $n>3$ remained mysterious, until it was linked to the total face color polynomial.  This is the main theorem of the paper:

\begin{theorem}
Let $G(V,E)$ be a connected trivalent graph with perfect matching $M$ and let $\Gamma_M$ be a perfect matching graph for the pair $(G,M)$.  Then
$$T(\Gamma_M,n) = \left\llbracket \Gamma_M \right\rrbracket.$$
\label{thm:PKEqualsT}
\end{theorem}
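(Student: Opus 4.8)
The plan is to prove \Cref{thm:PKEqualsT} by induction on the number of perfect matching edges and virtual crossings of $\Gamma_M$, showing that the total face color polynomial obeys, relation by relation, the recursion that defines $\left\llbracket\,\cdot\,\right\rrbracket$ in \Cref{defn:PenroseKauffman}. What makes this delicate is that $T(\Gamma_M,n)$ is a total Betti number, not an Euler characteristic, so the relations cannot be checked at the chain level; each must instead be verified on the $E_\infty$ page, using its description from \cite{BM-Color}.

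\textbf{Step 1: extend $T$ and reduce to two relations.} First extend $T(\,\cdot\,,n)$ to the intermediate diagrams occurring in the recursive evaluation of the bracket: to such a diagram $D$ assign the associated (sub)cube of resolutions of \cite{BM-Color} with the rank-$n$ Frobenius algebra $A$ placed once per loop, form the filtered homology, and let $T(D,n)$ be its total dimension. A fully resolved diagram is a disjoint union of loops with no differential, so $T$ there is $n$ raised to the number of loops, matching $\left\llbracket\bigcirc\right\rrbracket=n$ and multiplicativity. It then suffices to verify for $T$ the matching-edge relation $T(\PMEdgeDiag)=T(\IIDiag)-T(\XDiag)$ and the virtual-crossing relation $T(\SquareVirtual)=2\,T(\NodeVirtual)-T(\XDiag)$ (suppressing $n$ from the notation).

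\textbf{Step 2: the matching-edge relation.} Fix a matching edge $e$. As in Khovanov-type constructions the complex of $D$ is a mapping cone relating the two resolutions of $e$, the diagrams $\IIDiag$ and $\XDiag$, yielding a long exact sequence of filtered homologies; a long exact sequence by itself does not give a signed equality of total dimensions. The extra input, extracted from \cite{BM-Color}, is that after the change of basis of \Cref{def:colorbasis} the deformed complex splits as a direct sum indexed by colorings, each consistent coloring contributing exactly one dimension to the homology (for $D=\Gamma_M$ these generators are the proper $n$-face colorings). Following a coloring through the mapping cone then shows that the coloring count of the diagram $\IIDiag$ equals that of the marked matching-edge diagram plus that of the diagram $\XDiag$, which is exactly $T(\PMEdgeDiag)=T(\IIDiag)-T(\XDiag)$.

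\textbf{Step 3: the virtual-crossing relation, and the main obstacle.} It remains to establish $T(\SquareVirtual)=2\,T(\NodeVirtual)-T(\XDiag)$. This is the genuinely new ingredient and the point where the diagrammatic-tensor and TQFT viewpoints must be reconciled: one computes, in the color basis, the operator on $A\otimes A$ induced by a virtual crossing, checks that it equals twice the $\NodeVirtual$ operator minus the $\XDiag$ operator, and then reruns the coloring-count argument of Step 2 with this local model. I expect this to be the main obstacle, for two reasons. First, the weight $2$ and the signs must be pinned down precisely against the normalizations of the $n$-color Frobenius algebra and of the color basis. Second, one must confirm the identity passes from the chain complex to the $E_\infty$ page --- concretely, exhibit a sign-reversing, grading-preserving involution cancelling all colored states that do not descend to a proper face coloring --- since the alternating sum over homological degree would only recover an extension of the Penrose polynomial, not the finer invariant $T$. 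With Steps 1--3 in hand, the recursion of \Cref{defn:PenroseKauffman} gives $T(\Gamma_M,n)=\left\llbracket\Gamma_M\right\rrbracket$ by induction.
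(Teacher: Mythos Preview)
Your strategy differs substantially from the paper's, and Step~3 contains a genuine gap.

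The paper does not attempt to verify the PK recursions for $T$. Instead it introduces a third quantity, the bracket $\{G,M\}$ of \Cref{sec:colorBracket}, which by construction counts perfect matching $n$-colorings (\Cref{def:PMnColor}). The proof then splits into two independent halves. \Cref{thm:TEqualsMatching} shows $T(\Gamma_M,n)=\{G,M\}$; this is immediate from \Cref{thm:harmonicsGenerate}, since the filtered homology is generated by harmonic colorings and those are exactly the perfect matching $n$-colorings. \Cref{thm:PKEqualsMatching} shows $\llbracket\Gamma_M\rrbracket=\{G,M\}$ by a diagrammatic tensor contraction in the style of Penrose, where the Jordan Curve Theorem forces every contributing sign to be $+1$. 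Your Step~2 is essentially the first of these halves, dressed in mapping-cone language; once you invoke the color-basis splitting you have recovered $T=\{G,M\}$ and the cone is no longer doing any work.

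The real problem is Step~3. A virtual crossing in a ribbon diagram does not induce any operator on $\widehat V\otimes\widehat V$: the filtered chain complex of \Cref{sec:filteredHom} is built only from the states $\Gamma_\alpha$ and the maps $\widehat m,\widehat\Delta,\widehat\eta$, and it cannot distinguish an original immersion crossing $\SquareVirtual$ from a crossing $\XDiag$ produced by a $1$-smoothing. Under your extension in Step~1 one therefore has $T(\SquareVirtual)=T(\XDiag)$, so the relation you are trying to check collapses to $T(\NodeVirtual)=T(\XDiag)$, which is false in general. More broadly, the intermediate diagrams in the PK recursion carry three distinct local features (squares, plain crossings, nodes), and the subcube-of-resolutions extension you propose simply does not see these distinctions; there is no natural filtered-homology meaning for $\NodeVirtual$ or for a ``marked'' crossing. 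The paper's route around this obstacle is precisely the tensor assignment \eqref{eqn:SquareVirtual} together with the parity argument via the Jordan Curve Theorem, which lives entirely on the combinatorial side and never asks the homology to see the planar immersion.
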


\Cref{thm:PKEqualsT} unites two perspectives on the problem of coloring perfect matching graphs: one based upon TQFTs and the other on diagrammatic tensors.  The two perspectives are highlighted in \Cref{thm:TEqualsMatching} and \Cref{thm:PKEqualsMatching}, respectively, which together prove \Cref{thm:PKEqualsT}.  Using the TQFT machinery of harmonic colorings, the color hypercube, and more from \cite{BM-Color}, we show in \Cref{thm:TEqualsMatching} that the total face color polynomial, which counts face colorings that leave the faces that correspond to the cycles of $G\setminus M$ uncolored, is equal to the count of perfect matching $n$-colorings (see \Cref{fig:PMnColor}).  \Cref{thm:PKEqualsMatching} presents a graph theoretic argument using diagrammatic tensors that the Penrose-Kauffman bracket also counts perfect matching $n$-colorings (see \Cref{def:PMnColor}).

\begin{figure}[H]
\includegraphics[scale=.75]{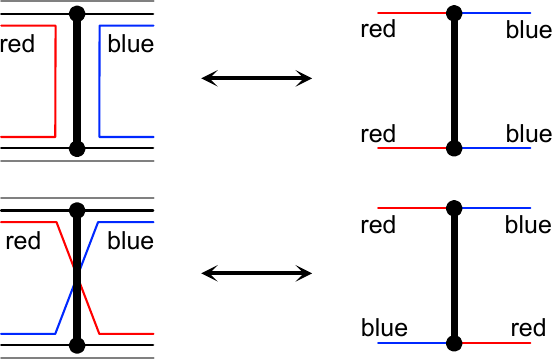}
\caption{Equivalence of colorings of faces adjacent to a perfect matching edge and perfect matching $n$-colorings.}
\label{fig:PMnColor}
\end{figure}


We encourage graph theorists to read \Cref{thm:PKEqualsMatching} first, which can be understood without needing to know TQFTs.  To fully appreciate \Cref{thm:TEqualsMatching} we encourage the reader to see \cite{BM-Color} where the machinery is fully worked out (see Theorem D in \cite{BM-Color}).  



As a corollary of \Cref{thm:PKEqualsT}, along with Theorem 6.17 and Remark 7.5 of \cite{BM-Color}, we obtain the following consequences of uniting these two perspectives.
\begin{corollary}
Let $G(V,E)$ be a connected trivalent graph.   
\begin{enumerate}
\item The total face color polynomial, $T(G,n)$, can be computed using the PK-bracket on the blow-up of any ribbon graph of $G$. 
\item The total face color polynomial gives meaning to the PK-bracket for $n>3$:  when $Aut(G)=1$, the PK-bracket is the total of the counts of all $n$-face colorings of all ribbon graphs of $G$.
\end{enumerate}
\end{corollary}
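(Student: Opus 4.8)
The plan is to deduce both statements directly from \Cref{thm:PKEqualsT} together with the structural results of \cite{BM-Color} that make $T(G,n)$ an abstract graph invariant, so the argument is essentially bookkeeping once those inputs are identified. Recall that, by definition, $T(G,n)$ means $T(\Gamma_M,n)$, where $\Gamma_M$ is the blow-up of \emph{some} ribbon graph $\Gamma$ of $G$ equipped with the perfect matching that the blow-up construction produces. So the first step is to isolate the two facts being invoked: (i) the blow-up of a ribbon graph of $G$ is canonically a perfect matching graph, so that \Cref{thm:PKEqualsT} applies to it; and (ii) $T(\Gamma_M,n)$ is independent of the choice of ribbon graph $\Gamma$ of $G$ and of the perfect matching used in the blow-up. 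Item (ii) is precisely Theorem 6.17 and Remark 7.5 of \cite{BM-Color}, and is what was alluded to in the paragraph following \Cref{definition:totalfacecolorpolynomial}.

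For part (1), given a ribbon graph $\Gamma$ of $G$ with blow-up $\Gamma_M$, \Cref{thm:PKEqualsT} gives $T(G,n) = T(\Gamma_M,n) = \llbracket \Gamma_M \rrbracket$. By (ii) the left-hand side does not depend on the choice of $\Gamma$, hence neither does $\llbracket \Gamma_M \rrbracket$; thus the PK-bracket of the blow-up of \emph{any} ribbon graph of $G$ computes $T(G,n)$. This is a single line once (i) and (ii) are in hand.

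For part (2), I would recall from Sections 7--8 of \cite{BM-Color} the combinatorial reading of $T(G,n)=T(G,n,1)$: when $\mathrm{Aut}(G)=1$, the Poincar\'{e} polynomial evaluated at $t=1$ --- i.e. the sum of the Betti numbers of the filtered $n$-color homology, interpreted through the color basis of \Cref{def:colorbasis} --- equals the total number of proper $n$-face colorings taken over all ribbon graphs of $G$. Combining this with part (1) gives immediately that $\llbracket \Gamma_M \rrbracket$ equals that total count, which is the meaning of the PK-bracket for $n>3$ asserted in the statement.

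The real content of the corollary lives entirely in the results being quoted; the place to be careful is that the blow-up is taken of a \emph{ribbon} graph, not of the abstract graph $G$, and distinct ribbon structures on $G$ can yield non-isomorphic perfect matching graphs --- so the invariance in (ii) is doing essential work and is the hypothesis I would flag as the crux. Likewise the $\mathrm{Aut}(G)=1$ assumption in (2) is exactly what prevents face colorings related by a symmetry of $G$ from being conflated in the count; without it, part (2) would have to be phrased with colorings weighted by automorphisms.
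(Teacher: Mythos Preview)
Your proposal is correct and follows exactly the paper's own justification: the paper presents this corollary without a separate proof, stating only that it follows from \Cref{thm:PKEqualsT} together with Theorem 6.17 and Remark 7.5 of \cite{BM-Color}, which is precisely the combination you unpack. Your write-up simply makes explicit the bookkeeping the paper leaves implicit.
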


Finally, we do not know of any state sum, skein relation, or deletion-contraction formula in graph theory that involves expanding along virtual crossings of a graph diagram as is done with the PK-bracket.  In knot theory, this idea will be used in a forthcoming paper by the second author on multi-virtual knot theory \cite{KPrep}. We speculate there may be other valuable graph theoretic formulae yet-to-be-discovered that also expand along virtual crossings.


\section{Ribbon graphs}\label{sec:ribbonGraphs}
In this section we introduce some preliminary notions of ribbon graphs which will be used throughout, but the reader should review \cite{BM-Color,BKR} and \cite[Section 1.1.4]{Moffat2013} for further details.  A {\em plane graph} $\Gamma$ is an embedding, $i:G \ra S^2$, of a connected planar graph $G$ into the sphere.  The key feature of plane graphs is that $S^2\setminus\iota (S^2)$ is a set of disjoint disks.  A ribbon graph captures this feature as well:  it is an embedding of a graph into a genus $g$ surface $\Sigma$ so that $\Sigma \setminus i(G)$ is a set of disks.   
\begin{definition}\label{Def:ribbongraph}
A {\em ribbon graph of a graph $G$} is an embedding $i:G\ra \Gamma$ where $G$ is thought of as a $1$-dimensional CW complex and $\Gamma$ is a surface with boundary where $\Gamma$ deformation retracts onto $i(G)$.  We say that \( G \) is the \emph{underlying graph} of \( \Gamma \), and that \( \Gamma \) is \emph{the surface associated to} the ribbon graph. 
\end{definition}

A drawing of a ribbon graph in the plane that respects the cyclic ordering of the edges at each vertex will be referred to as a \emph{ribbon diagram} (cf. \cite{BKR,BM-Color}).  We will often refer to the ribbon graph simply by $\Gamma$ and think of $\Gamma$ as a surface with an embedded graph $G$.  An orientation of a ribbon graph, if one exists, is an orientation of the surface. Let \( \overline{\Gamma} \) denote the closed smooth surface obtained by attaching discs to the boundary of $\Gamma$. 

\begin{definition}\label{definition:n-face-coloring}
An {\em $n$-face coloring of a ribbon graph $\Gamma$ (or $\overline{\Gamma}$)} is a choice of one of $n$ different colors (or more generally, labels) for each attaching disk of $\overline{\Gamma}$ such that no two disks adjacent to the same edge have the same color.
\end{definition}

For many computations in this paper we will need to choose a set of perfect matching edges, and when we are working in the context of a ribbon graph, we call the pair of a ribbon graph with a perfect matching a perfect matching graph.  

\begin{definition}\label{Def:pm}
A \emph{perfect matching} of an abstract graph $G(V,E)$ is a subset of the edges of the graph, $M\subset E$, such that each vertex is incident to exactly one edge in the subset. 
\end{definition}

\begin{definition}\label{Def:pm-graph}
A \emph{perfect matching graph}, denoted $\Gamma_M$, is a ribbon graph, $i:G\ra \Gamma$, together with a perfect matching $M$ of the graph $G$. We represent the perfect matching in a ribbon diagram of $\Gamma$ using thickened edges. 
\end{definition}

Throughout, an abstract graph $G(V,E)$ may be thought of as a connected $1$-dimensional CW complex by identifying vertices of $V$ with points and edges with segments that are glued to their coincident vertices.  Also, all graphs are multigraphs, which are allowed to have circles (edges with a single incident vertex) and multiple edges incident to the same two distinct vertices. Finally, ``vertex-free'' edges are allowed, i.e., circles.

The following construction will be useful for obtaining trivalent perfect matching graphs from a given, but not necessarily trivalent, ribbon graph, which is the blowup of a graph.  Blowups, even of trivalent graphs, come with a canonical perfect matching, which allows one to obtain graph and ribbon graph invariants.  

\begin{definition}
Let $G(V,E)$ be a graph and $\Gamma$ be a ribbon graph of $G$ represented by a ribbon diagram.  Define the {\em blowup of $\Gamma$}, denoted $\Gamma^\flat$, to be the ribbon diagram  given by replacing every vertex of $\Gamma$ with a circle as in
\begin{center}
\includegraphics[scale=0.07]{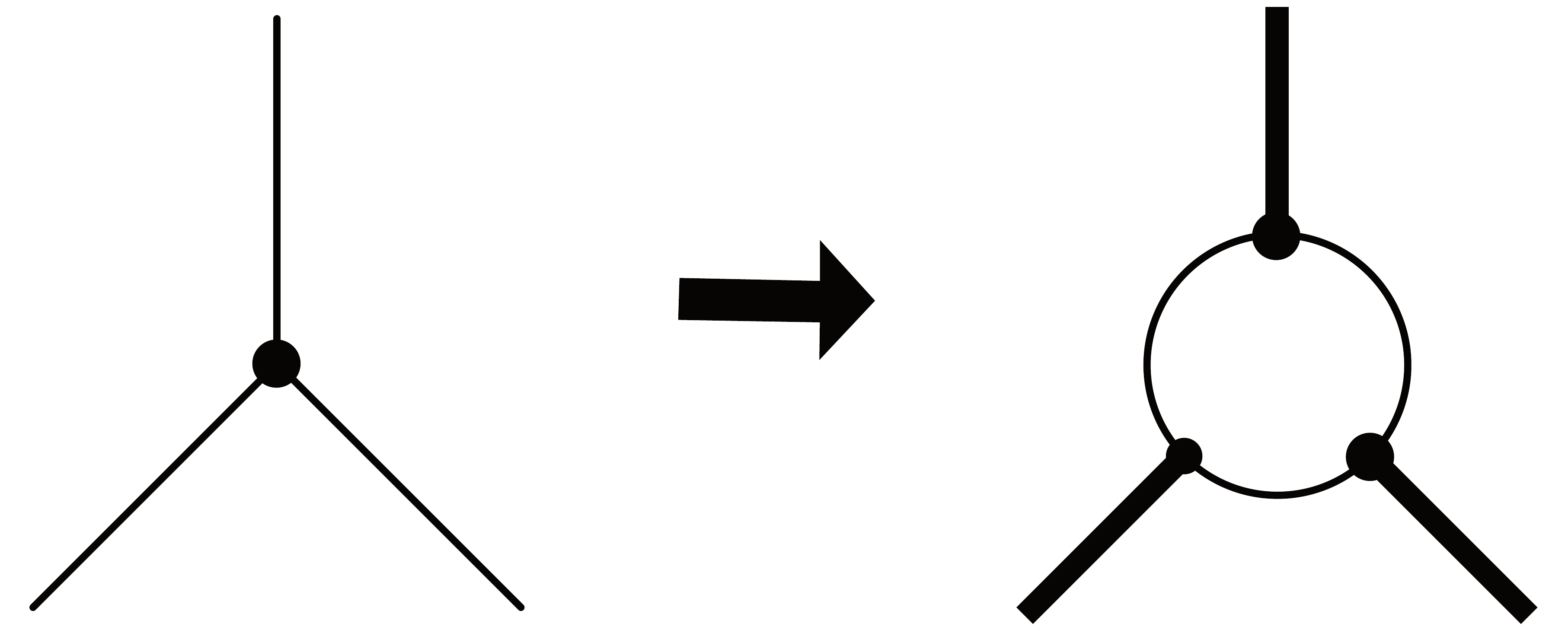}
\end{center}
A perfect matching can be associated to $\Gamma^\flat$  using the original edges $E$ of $\Gamma$ as shown in the picture above.  The  resulting perfect matching graph is $\Gamma^\flat_E$.  \label{def:blowup-of-a-graph}
\end{definition}


There is one additional type of n-coloring of a finite trivalent graph $G$ with perfect matching $M$ which we define for use in this paper.

\begin{definition}\label{def:PMnColor}
A {\em perfect matching $n$-coloring} of a trivalent graph $G$ with perfect matching $M$ is an assignment of colors to the non-matching edges of $G$ from the color set $\{1,2,...,n\}$ so that exactly two distinct colors are used to color the edges adjacent to each matching edge and these two colors both appear on edges at each end of the matching edge.
\end{definition}

Given a ribbon graph $\Gamma$ of a trivalent graph $G(V,E)$, the blowup will have the property that the circles of the all-zero smoothing (cf. \Cref{section:smoothing-states-hypercubes}) correspond to the faces of $\Gamma$.  In this case, the duality between face and edge colorings (cf. \Cref{fig:PMnColor}) implies that \Cref{def:PMnColor} and \Cref{definition:n-face-coloring} coincide.  If, however, one chooses a perfect matching $M\subset E$ instead of blowing up, a perfect matching $n$-coloring specifies a proper coloring for only the faces adjacent to a perfect matching edge of a ribbon graph.  In this case, the faces corresponding to the cycles of $G\setminus M$ are left uncolored.

\section{Filtered $n$-color homology and the total face color polynomial}\label{sec:filteredHom}
We first recall the essential constructions for filtered $n$-color homology that are needed to define the total face color polynomial (see \cite{BaldCohomology,BM-Color}). 

 \subsection{The hypercube of states}\label{section:smoothing-states-hypercubes} Let $G(V,E)$ be a trivalent graph and $M$ be a perfect matching of $G$. The number of vertices is even, and the number of perfect matching edges of $M$ is then $\ell=|V|/2$.  Label and order these edges  by $M=\{e_1, e_2, \dots, e_\ell\}.$ 
Let a perfect matching graph $\Gamma_M$ for $(G,M)$ be represented by a perfect matching diagram.  Resolve each perfect matching edge $e_i \in \Gamma_M$ in one of two possible ways according to two smoothings, that is, replace a neighborhood of each perfect matching edge $e_i$ in $\Gamma_M$ with $\IIDiag$, called a $0$-smoothing, or $\XDiag$, called a $1$-smoothing. The resulting set of immersed circles in the plane is called a {\em state} of $\Gamma_M$.

There are $2^\ell$ states of $\Gamma_M$, each of which can be indexed by an $\ell$-tuple of $0$'s and $1$'s that stand for the type of smoothing.  For  $\alpha= (\alpha_1, \dots,\alpha_\ell)$ in $\{0,1\}^\ell$, let $\Gamma_\alpha$ denote the state where each perfect matching edge $e_i$ has been resolved by an $\alpha_i$-smoothing. Let $|\alpha| = \sum_{i=1}^\ell \alpha_i$, and organize the states into columns based on the value of $|\alpha|$. The value of $|\alpha|$ will become the homological degree of the $n$-color theory.  

\subsection{Filtered $n$-color homology} We are now ready to associate vector spaces to the states of a perfect matching graph $\Gamma_M$ to build the chain complex for the filtered $n$-color homology.  We will only recall the necessary basics here and refer the reader to \cite{BM-Color} for more detail.  Let $k_\alpha$ be the number of immersed circles in the state $\Gamma_\alpha$, and associate the vector space $\large \widehat{V}_\alpha = \large \widehat{V}^{\otimes k_\alpha}$ to the state $\Gamma_\alpha$ where $\widehat{V}=\BC[x]/(x^n-1)$.

Define the complex $\widehat{C}^{*,*}(\Gamma_M)$ by

$$\widehat{C}^{i,*}(\Gamma_M)=\bigoplus_{\substack{\alpha\in\{0,1\}^n \\ i=|\alpha|}}\widehat{V}_\alpha.$$

To define the differential for the filtered $n$-color  homology, $\widehat{\del}: \widehat{C}^{i}(\Gamma_M) \rightarrow \widehat{C}^{i+1}(\Gamma_M)$, consider each edge $\Gamma_\alpha \rightarrow \Gamma_{\alpha'}$ in the hypercube and define a map, $\widehat{\del}_{\alpha\alpha'}:\widehat{V}_\alpha \ra \widehat{V}_{\alpha'}$ for $\widehat{V}_\alpha \subset \widehat{C}^i(\Gamma_M)$ and $\widehat{V}_\alpha' \subset \widehat{C}^{i+1}(\Gamma_M)$.  This map is determined by the change in the number of circles between $\Gamma_\alpha$ and $\Gamma_{\alpha'}$:  $m$ if two circles in $\Gamma_\alpha$ are merged into one, $\Delta$ if one circles splits into two, and $\eta$ if the number of circles is unchanged.  The differential can then be succinctly written using the local maps
\begin{eqnarray} \widehat{m}(x^i \ot x^j) &=&  x^{i+j}, \label{eq:wide-hat-differential-m}\\
\widehat{\Delta}(x^k) &=&  \sum_{\substack{0 \leq i,j < n \\ i+j \equiv (k + 2m) \!\!\!\!  \mod n}} x^i \ot x^j, \nonumber \label{eq:wide-hat-differential-delta}\\
\widehat{\eta}(x^k)&=& \sqrt n x^{k+m}, \nonumber \label{eq:wide-hat-differential-eta}
\end{eqnarray}
Here, $m= \frac{n}{2}$ if $n$ is even and $m= \frac{n-1}{2}$ otherwise.  We then define the {\em filtered $n$-color homology} to be (see Section 5.2 in \cite{BM-Color}):
\begin{equation}
\widehat{CH}_n^*(\Gamma_M,\mathbbm{C}):=H(\widehat{C}^{*,*}(\Gamma_M), \widehat{\del}).
\end{equation}

The basis $\{1,x,\ldots ,x^{n-1}\}$ is useful for thinking of the filtered $n$-color homology as the $E_\infty$ page of a spectral sequence whose $E_1$ page is the bigraded $n$-color homology (cf. \cite{BM-Color}).  For the purposes of this paper, it is advantageous to interpret the meaning of the elements in the vector space $\widehat{V}_\alpha$ for a state $\Gamma_\alpha$ using a different basis.  In this basis, the elements can be thought of as coloring the circles in the state $\Gamma_\alpha$. Each state can then be interpreted as coloring the circles with $n$ different colors. First, the definition:

\begin{definition} Let $n$ be a positive integer with $n>1$ and set $\lambda = e^{\frac{2\pi \mathrm{i}}{n}}$. The {\em color basis} of $\widehat{V}=\BC[x]/(x^n-1)$ is
$$c_i := \frac{1}{n}\left(1+ \lambda^i x +\lambda^{2i}x^2+ \lambda^{3i}x^3+\cdots+\lambda^{(n-1)i}x^{n-1}\right)$$
for $0\leq i \leq n-1$. \label{def:colorbasis}
\end{definition}

The $c_i$'s are the different colors of the theory.  Hence, when $n=4$, there are four colors $\{c_0, c_1, c_2, c_3\}$ for filtered $4$-color homology and so on. Also, note that choosing $\mathbbm{k}=\BC$ is now advantageous to make the $c_i$'s well-defined for $n>2$ since $\lambda$ is an $n$th root of unity.

\begin{lemma}[cf. Lemma 5.9 in \cite{BM-Color}]
\label{lem:widehat-maps}
In the color basis, the following equations hold:
\begin{enumerate}
\item $c_i \cdot c_j =  \delta^{ij} c_j,$ hence $\widehat{m}(c_i \ot c_j) =  \delta^{ij} c_j$,
\item $\widehat{\Delta}(c_i) =  n \lambda^{-2mi} c_i \ot c_i,$
\item $\widehat{\eta}(c_i) = \sqrt n \lambda^{-mi} c_i$, and
\item $(\lambda^i x) \cdot c_i =  c_i$.
\end{enumerate}
\end{lemma}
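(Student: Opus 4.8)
The plan is to verify each identity by direct computation in the standard basis $\{1,x,\ldots,x^{n-1}\}$, using the formulas for $\widehat m$, $\widehat\Delta$, $\widehat\eta$ in \eqref{eq:wide-hat-differential-m} together with \Cref{def:colorbasis}. The key observation is that the $c_i$ are (up to normalization) the characters/idempotents associated to the eigenvalues $\lambda^i$ of the operator ``multiply by $x$'' on $\widehat V = \BC[x]/(x^n-1)$; all four claims are really statements about this eigenbasis. I would establish part (4) first, since it is the cleanest and drives the rest: from $x^j c_i = \frac1n\sum_{k} \lambda^{ki} x^{k+j} = \frac1n\sum_{k}\lambda^{(k-j)i}x^{k} = \lambda^{-ji} c_i$ (reindexing mod $n$ and using $x^n=1$), one gets $x\cdot c_i = \lambda^{-i}c_i$, equivalently $(\lambda^i x)\cdot c_i = c_i$.

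Next I would prove part (1). Using $c_i = \frac1n\sum_{j=0}^{n-1}\lambda^{ji}x^j$ and part (4), we have $c_i\cdot c_j = \frac1n\sum_{k}\lambda^{ki}\, x^k c_j = \frac1n\sum_{k}\lambda^{ki}\lambda^{-kj} c_j = \Big(\frac1n\sum_{k}\lambda^{k(i-j)}\Big) c_j$, and the geometric sum $\frac1n\sum_{k=0}^{n-1}\lambda^{k(i-j)}$ equals $\delta^{ij}$ by orthogonality of roots of unity (it is $1$ when $i\equiv j$ and $0$ otherwise, since $\lambda^{i-j}\neq 1$ for $0\le i\neq j\le n-1$). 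This is exactly $\widehat m(c_i\ot c_j)=\delta^{ij}c_j$ since $\widehat m$ is the multiplication map.

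For part (3), apply the definition of $\widehat\eta$ termwise: $\widehat\eta(c_i) = \frac1n\sum_{k}\lambda^{ki}\,\widehat\eta(x^k) = \frac1n\sum_{k}\lambda^{ki}\sqrt n\, x^{k+m} = \sqrt n\cdot\frac1n\sum_{k}\lambda^{ki}x^{k+m}$, and reindexing $k\mapsto k-m$ turns the sum into $\lambda^{-mi}\cdot\frac1n\sum_k \lambda^{ki}x^k = \lambda^{-mi}c_i$, giving $\widehat\eta(c_i)=\sqrt n\,\lambda^{-mi}c_i$. For part (2), the comultiplication $\widehat\Delta(c_i) = \frac1n\sum_{k}\lambda^{ki}\widehat\Delta(x^k)$, and I would check that $c_i\ot c_i = \frac1{n^2}\sum_{a,b}\lambda^{(a+b)i}x^a\ot x^b$ collects, for each fixed residue $r$ of $a+b$ modulo $n$, the coefficient $\lambda^{ri}$ times $\sum_{a+b\equiv r} x^a\ot x^b$; comparing with $\widehat\Delta(x^k)=\sum_{i+j\equiv k+2m} x^i\ot x^j$ and summing over $k$ against $\lambda^{ki}$ (using $\sum_k \lambda^{ki}[k+2m\equiv r] = \lambda^{(r-2m)i}$) matches $\widehat\Delta(c_i)$ with $n\lambda^{-2mi} c_i\ot c_i$ after accounting for the $\frac1n$ versus $\frac1{n^2}$ normalization. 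The ``hence $\widehat m(c_i\ot c_j)=\delta^{ij}c_j$'' in (1) is immediate once $\widehat m$ is identified with the ring multiplication, and similarly parts (2)–(3) transfer verbatim to the maps $\widehat\Delta,\widehat\eta$.

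The only mild obstacle is bookkeeping with the shift $2m$ in $\widehat\Delta$ and the shift $m$ in $\widehat\eta$: one must be careful that reindexing sums modulo $n$ (and invoking $x^n=1$) is done consistently, and that the case parity of $n$ entering $m=\tfrac n2$ or $\tfrac{n-1}{2}$ plays no role beyond fixing the value of the shift. Everything else is the orthogonality relation $\frac1n\sum_{k=0}^{n-1}\lambda^{k(i-j)}=\delta^{ij}$, applied repeatedly. Since this is precisely the content of Lemma 5.9 in \cite{BM-Color}, one may alternatively just cite that lemma, but the self-contained computation above is short enough to include.
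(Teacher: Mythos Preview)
Your proposal is correct: each identity follows by the direct computation you outline, using the eigenvector relation $x\cdot c_i=\lambda^{-i}c_i$ (which you derive first) together with the orthogonality relation $\frac{1}{n}\sum_{k=0}^{n-1}\lambda^{k(i-j)}=\delta^{ij}$ and a reindexing modulo $n$ to absorb the shifts by $m$ and $2m$. The paper does not supply a proof here at all---it simply records the statement and defers to Lemma~5.9 of \cite{BM-Color}---so your self-contained verification is strictly more than what the paper provides, and is exactly the argument one would expect to find behind that citation.
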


As shown in \cite{BM-Color} the main advantage of the color basis is that it allows one to conceptualize the elements of filtered $n$-color homology as proper colorings (cf. Sections 6 and 7 of \cite{BM-Color}). In particular, the Color Basis Lemma (cf. Lemma 6.4 of \cite{BM-Color}) implies that no two distinct colorings $c_I, c_J\in \widehat{V}_\alpha$ map to the same coloring $c'_I$ of $\widehat{V}_\alpha'$ or vice versa. More specifically, if $\widehat{\del}_{\alpha\alpha'}:\widehat{V}_\alpha \ra \widehat{V}_{\alpha'}$ is the edge-differential ($\widehat{m}$, $\widehat{\Delta}$, $\widehat{\eta}$) corresponding to an edge in the hypercube of states of $\Gamma_M$ from $\Gamma_\alpha$ to $\Gamma_{\alpha'}$ (and $\widehat{\del}_{\alpha \alpha'}^*$ is defined similarly) then the maps $\widehat{\del}_{\alpha\alpha'}$ and $\widehat{\del}^*_{\alpha\alpha'}$ are one-to-one on color basis elements that are not in their kernels.  This turns out to be the key to showing that the homology classes are supported individual states, which is discussed below.  


\subsection{The harmonic colorings of a state} Next, we recall (again from \cite{BM-Color}) the harmonic colorings of a state, $\widehat{\mathcal{CH}}_n(\Gamma_\alpha)$, which can be thought of as the harmonic elements of a Dirac-like operator that exist only on the state $\Gamma_\alpha$. This subspace of $\widehat{\mathcal{CH}}_n^i(\Gamma)$ is the  harmonic elements of the vector space $\widehat{V}_\alpha$ that do not depend on elements of other state vector spaces in $\widehat{C}^i(\Gamma)=\oplus_{|\alpha| = i}\widehat{V}_\alpha$ to form a harmonic class in $\widehat{\mathcal{CH}}_n^i(\Gamma)$. 

Let $\Gamma_\alpha$ be a state of the hypercube for perfect matching graph $\Gamma_M$.  Consider all states $\Gamma_{\alpha'}$ such that $|\alpha'|=|\alpha|+1$ where there is an edge in the hypercube between $\Gamma_\alpha$ and $\Gamma_{\alpha'}$.  Denote the union of these states by $\Gamma_\alpha^+ = \cup \Gamma_{\alpha'}$.  Then $\widehat{C}^{i+1}(\Gamma^+_\alpha) \subset \widehat{C}^{i+1}(\Gamma)$ is made up of the direct sum of vector spaces $\oplus V_{\alpha'}$. The restriction of the metric (cf. Section 4 of \cite{BM-Color}) to this subspace remains a metric.

Similarly, define $\widehat{C}^{i-1}(\Gamma_\alpha^-) \subset \widehat{C}^{i-1}(\Gamma)$ consisting of all vector spaces $\widehat{V}_{\gamma}$ such that there is an edge from $\Gamma_\gamma$ to $\Gamma_\alpha$ in the hypercube of states. 

Define $\widehat{\del}_\alpha:\widehat{V}_\alpha \ra \widehat{C}^{i+1}(\Gamma^+_\alpha)$ by taking the sum of all  differentials from  $\widehat{V}_\alpha$ to the $(|\alpha|+1)$-states. Similarly, define  $\widehat{\del}_\alpha^*: \widehat{V}_\alpha \ra \widehat{C}^{i-1}(\Gamma^-_\alpha)$ to be the sum of all nontrivial adjoint maps from $\widehat{V}_\alpha$ to $(|\alpha|-1)$-states.

\begin{definition}\label{defn:harmonic-coloring-of-a-state}
The {\em harmonic colorings of a state $\Gamma_\alpha$}, denoted $\widehat{\mathcal{CH}}_n(\Gamma_\alpha)$, is the set of elements of $\widehat{V}_\alpha$ that is in the kernel of $\widehat{\del}_\alpha$ and the kernel of $\widehat{\del}^*_\alpha$.  That is,
$$\widehat{\mathcal{CH}}_n(\Gamma_\alpha)=\ker \widehat{\del}_\alpha \bigcap \ker \widehat{\del}^*_\alpha.$$
\end{definition}

\begin{figure}[H]
\includegraphics[scale=.8]{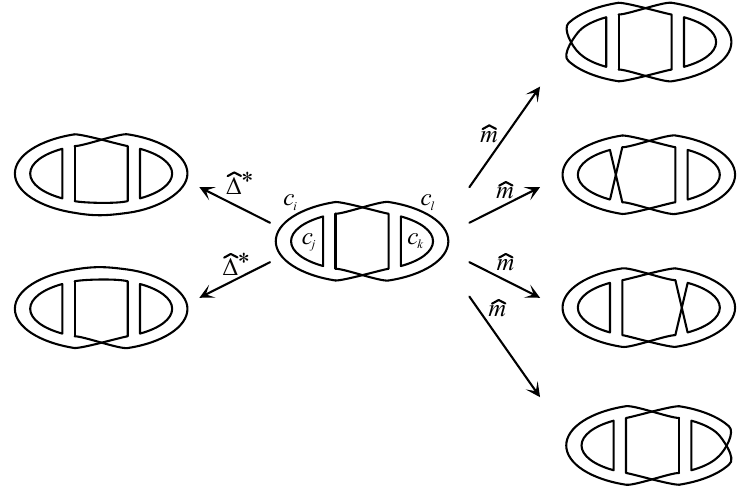}
\caption{A harmonic coloring when $c_i, c_j, c_k$ and $c_l$ are all distinct.}
\label{fig:harmonic}
\end{figure}
It is clear from the definition of the local differentials and \Cref{lem:widehat-maps} that elements of the kernel of $\widehat{\del}_\alpha$ must look like \Cref{fig:harmonic}, in that only multiplications emanate from the state ($\widehat{\Delta}$ and $\widehat{\eta}$ maps have trivial kernel).  For the local adjoint maps we have the following definitions (cf. Lemma 6.2 of \cite{BM-Color}):
\begin{eqnarray*}\label{eqn:adjoint-of-m-delta-eta}
\widehat{m}^*(c_i) &=& c_i\ot c_i\\ \nonumber
\widehat{\Delta}^*(c_i\ot c_j) &=& n \lambda^{2mi}\delta^{ij}c_i\\ \nonumber
\widehat{\eta}^*(c_i) &=& \sqrt{n} \lambda^{mi}c_i.\nonumber
\end{eqnarray*}
Again, it is clear from the local differentials that elements of the kernel of $\widehat{\del}_\alpha^*$ must look like \Cref{fig:harmonic}, in that only $\widehat{\Delta}^*$ maps emanate from the state ($\widehat{m}^*$ and $\widehat{\eta}^*$ maps have trivial kernel).  While such states are the only ones that can support colorings, more is shown in Theorem D of \cite{BM-Color}.  In particular, it is shown that such harmonic colorings generate the filtered $n$-color homology.  While the theorem is stated for the blowup of the graph (i.e. $\Gamma_E^\flat$) in \cite{BM-Color}, the proof given there also works for any perfect matching graph, $\Gamma_M$. Thus, we conclude the following new theorem:
\begin{theorem}[cf. Theorem D in \cite{BM-Color}]
Let $\Gamma_M$ be a perfect matching graph of an abstract graph $G(V,E)$ with perfect matching $M\subset E$.  Then the filtered $n$-color homology is generated by harmonic colorings, i.e. 
$$\widehat{CH}_n^i(\Gamma_M,\mathbb{C}) \cong \bigoplus_{|\alpha|=i} \widehat{\mathcal{CH}}_n(\Gamma_\alpha).$$
Moreover, the harmonic colorings correspond to colorings, and we obtain that the dimension of $\widehat{\mathcal{CH}}_n(\Gamma_\alpha)$ is equal to  the number of perfect matching $n$-colorings of $\Gamma_\alpha$, that is, the number of proper face colorings of $\overline{\Gamma}_\alpha$ in which the faces that correspond to the cycles of $G\setminus M$ are left uncolored.
\label{thm:harmonicsGenerate}
\end{theorem}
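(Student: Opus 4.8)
The plan is to run the argument of Theorem~D of \cite{BM-Color} --- which proves exactly this statement for the blowup $\Gamma^\flat_E$ --- and to check at each step that the only hypotheses actually used are trivalence of $G$ and the existence of the perfect matching $M$, so that ``$\Gamma^\flat_E$'' may be replaced throughout by an arbitrary perfect matching graph $\Gamma_M$. That argument rests on two pillars. The first is Hodge theory: since the coefficients are $\BC$ and $\widehat{C}^{*,*}(\Gamma_M)$ carries the positive-definite Hermitian metric recalled above, one has the orthogonal decomposition $\widehat{C}^i(\Gamma_M)=\im\widehat{\del}\oplus\widehat{\mathcal H}^i\oplus\im\widehat{\del}^*$ with $\widehat{\mathcal H}^i=\ker\widehat{\del}_i\cap\ker\widehat{\del}_{i-1}^*$, so $\widehat{CH}_n^i(\Gamma_M,\BC)\cong\widehat{\mathcal H}^i$ and everything reduces to identifying the harmonic subspace. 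The second is the structure of the color basis of \Cref{def:colorbasis}: by \Cref{lem:widehat-maps}(1) the $c_i$ are orthogonal idempotents, so $\widehat V\cong\prod_{i=0}^{n-1}\BC$, and together with the adjoint formulas for $\widehat{m}^*,\widehat{\Delta}^*,\widehat{\eta}^*$ one sees that every edge-differential and every adjoint edge-map sends a color-basis vector $c_I\in\widehat V_\alpha$ either to $0$ or to a nonzero scalar multiple of a single color-basis vector, the Color Basis Lemma (Lemma~6.4 of \cite{BM-Color}) recording that this map is injective off its kernel.

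From these I would extract two facts. (i) A vector $c_I\in\widehat V_\alpha$ lies in $\widehat{\mathcal{CH}}_n(\Gamma_\alpha)=\ker\widehat{\del}_\alpha\cap\ker\widehat{\del}_\alpha^*$ exactly when every hypercube edge out of $\Gamma_\alpha$ is a merge of two circles with distinct colors in $I$ and every hypercube edge into $\Gamma_\alpha$ is a split of one circle into two circles with distinct colors in $I$; hence $\widehat{\mathcal{CH}}_n(\Gamma_\alpha)$ is the $\BC$-span of such $c_I$. (ii) Because the color basis is orthogonal, the whole cube complex splits orthogonally, $\widehat{C}^{*,*}(\Gamma_M)=\bigoplus_{\mathcal S}\mathcal S$, into subcomplexes $\mathcal S$ that are spanned by color-basis vectors and preserved by $\widehat{\del}$ and $\widehat{\del}^*$ --- namely the connected components of the graph on color-basis vectors whose edges are the nonzero matrix entries of $\widehat{\del}$.

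The core step is then $\widehat{\mathcal H}^i=\bigoplus_{|\alpha|=i}\widehat{\mathcal{CH}}_n(\Gamma_\alpha)$. Since $\widehat{\del}_i$ and $\widehat{\del}_{i-1}^*$ restrict on the orthogonal summand $\widehat V_\alpha$ to $\widehat{\del}_\alpha$ and $\widehat{\del}_\alpha^*$, the inclusion $\supseteq$ is immediate. For $\subseteq$ I would use the splitting (ii): a vector is globally harmonic if and only if its component in each $\mathcal S$ is harmonic in $\mathcal S$, so it suffices to prove that for every $\mathcal S$ the Hodge-harmonic space of $\mathcal S$ is spanned by the locally harmonic color-basis vectors it contains --- those of (i). This is the real content, and it is precisely what \cite{BM-Color} carries out: a color-basis vector of $\mathcal S$ that is \emph{not} locally harmonic always carries a nonzero differential in or out, and one organizes these vectors into an acyclic matching, equivalently performs Gaussian elimination one matching edge at a time (using that the split- and $\widehat{\eta}$-type edges are never killed), collapsing $\mathcal S$ onto its locally harmonic generators with vanishing induced differential; comparing dimensions via Hodge theory then closes the argument. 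I expect this collapse to be the main obstacle; the one extra thing to verify in the present generality, and it is routine, is that neither the metric, nor the color-basis and adjoint formulas, nor the collapse ever uses that $\Gamma_M$ is a blowup --- only trivalence of $G$ and the perfect matching $M$ enter.

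For the ``moreover'' clause I would finally translate (i) into the language of colorings. The immersed circles of the state $\Gamma_\alpha$ are the faces of $\overline{\Gamma}_\alpha$, and the circles merged or split across a hypercube edge are those passing near the corresponding perfect matching edge; so the condition in (i) says exactly that the coloring is proper along every matching edge while placing no constraint on the faces that correspond to the cycles of $G\setminus M$. Matching this against \Cref{def:PMnColor}, \Cref{definition:n-face-coloring}, and the face/edge duality of \Cref{fig:PMnColor}, the locally harmonic $c_I\in\widehat V_\alpha$ are in bijection with the perfect matching $n$-colorings of $\Gamma_\alpha$, equivalently with the proper face colorings of $\overline{\Gamma}_\alpha$ that leave the $G\setminus M$ faces uncolored. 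Hence $\dim\widehat{\mathcal{CH}}_n(\Gamma_\alpha)$ equals the number of such colorings, completing the statement. This last step is bookkeeping given \Cref{lem:widehat-maps}; all the substance is in the core step.
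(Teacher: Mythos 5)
Your proposal follows essentially the same route as the paper: the paper itself proves this statement simply by observing that the proof of Theorem~D in \cite{BM-Color} (Hodge theory with the color basis, identification of locally harmonic color-basis vectors, and the translation to perfect matching $n$-colorings) never uses the blowup structure, only trivalence and the perfect matching, so it applies verbatim to any $\Gamma_M$. Your write-up fleshes out that same argument and makes the same key observation, so it is correct and not a genuinely different approach.
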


It is also shown in \cite{BM-Color} that the Euler characteristic of this homology is the evaluation of the usual Penrose polynomial found in the literature evaluated at $n$.  However, taking the Poincar\'{e} polynomial of this homology yields another invariant of the perfect matching graph.

\begin{definition} Let $G(V,E)$ be a connected trivalent graph, $M\subset E$ a perfect matching, and let $\Gamma_M$ be any perfect matching graph of $(G,M)$. The Poincar\'{e} polynomials of the filtered $n$-color homologies generate the {\em $2$-variable total face color polynomial} which is characterized by
$$T(\Gamma_M,n,t) := \sum_{|\alpha|=i} t^i \dim \widehat{\mathcal{CH}}_n(\Gamma_\alpha) $$
when evaluated at $n\in\BN$. The {\em total face color polynomial of $\Gamma_M$} is $T(\Gamma_M,n):=T(\Gamma_M,n,1)$.   Finally, define the \emph{total face color polynomial of $\Gamma$} to be the total face color polynomial of the blowup, $T(\Gamma,n,t) := T(\Gamma_E^\flat,n,t)$ and $T(\Gamma,n) := T(\Gamma,n,1)$.
\label{definition:totalfacecolorpolynomial}
\end{definition}

The definition of the $2$-variable total face color polynomial given in \cite{BM-Color} is equivalent to $T(\Gamma^\flat_E,n,t)$ above, where $\Gamma$ is a ribbon diagram of an abstract graph $G(V,E)$.  This indeed gives a polynomial that counts the total number of face colorings of the ribbon graphs in the hypercube of states when evaluated at $t=1$.  If $G$ is trivalent, it is an invariant of the abstract graph, not just the ribbon graph used (cf. Section 7 of \cite{BM-Color}). Therefore we define $T(G,n):=T(\Gamma,n)$ for any ribbon graph $\Gamma$ of a trivalent graph $G$.

More generally, if $G$ is trivalent and equipped with a perfect matching $M$, then $T(\Gamma_M,n,t)$, is an invariant of the perfect matching graph $\Gamma_M$ (i.e. it depends on both the ribbon graph, and the chosen perfect matching).  Thus, we may conceive of the hypercube of smoothings as ribbon graphs in which the faces that correspond to the cycles of $G\setminus M$ are not colored.  Colorings of such ribbon graphs are equivalent to perfect matching $n$-colorings (cf. \Cref{def:PMnColor} and \Cref{fig:PMnColor}). 




\section{The Penrose and Penrose-Kauffman Brackets}

In this section we define the Penrose-Kauffman bracket, or PK-bracket, which is a coloring polynomial in the variable $n$ (that can be taken to be a positive integer), defined for trivalent graphs $G$ with perfect matching $M.$ This polynomial is a generalization of the $PK$ evaluation at $n=3$ studied in \cite{Kauffman}. The special case at $n=3$ counts the number of $3$-edge colorings of an arbitrary
trivalent graph $G$ (no perfect matching required) via a generalization of the original Penrose evaluation \cite{Kauffman, Penrose}. 

The key point about the evaluation of the PK-bracket is that it
gives the total number of colorings of the graph $G$ for a ribbon diagram of $G$ in the plane, and it follows the original Penrose expansion, with an extra caveat for the singularities
of the immersion. In our generalization, we will follow the same procedure for the PK-bracket and obtain a count of special colorings of the perfect matching graph 
using $n$ colors. The Penrose-Kauffman bracket extends the Penrose evaluation to arbitrary trivalent graphs with perfect matchings following the methods described in \cite{BaldCohomology,BKR,BLM,BM-Color,Kauffman}.\\

We begin with a brief description of the Penrose polynomial, then introduce a bracket for counting perfect matching $n$-colorings, and lastly introduce the PK-bracket.

\subsection{The Penrose Polynomial} 
In 1971, Roger Penrose \cite{Penrose} described several formulas for computing the number of $3$-edge colorings of a planar trivalent graph, one of which led to his famous polynomial.  We now recall an intuitive definition of the Penrose polynomial from \cite{BM-Color} that is defined using brackets (see also \cite{BaldCohomology}).

\begin{definition}\label{defn:penrose_poly}
Let $G$ be a trivalent graph with a perfect matching $M$, and let $\Gamma_M$ be a perfect matching graph for the pair $(G,M)$.  Then the {\em Penrose polynomial}, denoted $\left[\Gamma_M\right]_n$,  is found by recursively applying the bracket $$\left[ \PMEdgeDiag \right]_{\! n} = \left[ \IIDiag  \right]_{\! n}  - \left[ \XDiag \right]_{\! n}$$ to perfect matching edges of $\Gamma_M$ and setting the value of immersed loops to $\left[ \bigcirc \right]_{\! n} = n$.  
\end{definition}

Penrose observed that evaluation of the polynomial at $n=3$ computes the number of $3$-edge colorings for planar graphs.  

The minus sign appearing in the recursive relation makes the Penrose polynomial amenable to categorification, and in \cite{BM-Color} its evaluation at $n$ was shown to be the Euler characteristic of the bigraded $n$-color homology, which via a spectral sequence ties the Penrose polynomial to the filtered $n$-color homology and the total face color polynomial.

\subsection{A bracket that counts perfect matching $n$-colorings.}\label{sec:colorBracket}
We point out first an intermediary, purely combinatorial interpretation of the coloring count for $(G,M).$ Define the bracket, denoted by  $\{G,M \}$, by the recursion
$$\left\{ \PMEdgeDiag  \right\} = \left\{ \VGlyph \right\} + \left\{  \CGlyph  \right\},$$
where it is understood that $$\{ G,M\} = \sum_{S} \{S\},$$ where each matching edge has been replaced by the glyphs in the recursion above to form a collection of 
state configurations consisting in circles connected by the wiggly glyphs in the form
$ \{ \VGlyph \} $ and $\{  \CGlyph  \}.$  The evaluation of a state $S$ is defined to be equal to the number of ways to color the circles in $S$ with $n$ colors so that each pair of arcs joined by a wiggly glyph are colored differently. \\

\begin{example}
Consider the theta graph below.  Observe that after resolving the matching edge, the cross-resolution cannot be colored with different colors at the wiggly glyph.

\begin{eqnarray*}
\left\{ \ThetaG \right\} &=& \left\{ \ThetaZeroSquiggle \right\} + \left\{ \ThetaOneSquiggle \right\}\\
&=& n(n-1) + 0.
\end{eqnarray*}
\end{example}

Since, by its definition, $\{G,M\}$ counts those colorings of the perfect matching graph so that exactly two distinct colors appear at each matching edge satisfying our conditions for an $n$-coloring of $(G,M),$ it follows that $\{G,M\}$ is equal to the number of perfect matching $n$-colorings.  Moreover, \Cref{thm:harmonicsGenerate} states that the total face color polynomial gives the same count.  Thus we obtain the following theorem.

\begin{theorem}
Let $\Gamma_M$ be a perfect matching graph for the pair $(G,M)$.  Then
$$T(\Gamma_M,n) = \left\{G,M \right\}.$$
\label{thm:TEqualsMatching}
\end{theorem}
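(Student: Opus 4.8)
The plan is to show that both $T(\Gamma_M,n)$ and $\{G,M\}$ compute one and the same number, the total count of perfect matching $n$-colorings of $(G,M)$, and that on each side this count is organized in the same way over the hypercube of states. So the argument has two halves — a homological half and a diagrammatic half — that are joined by a purely combinatorial bijection.

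On the homological side, \Cref{definition:totalfacecolorpolynomial} evaluated at $t=1$ gives $T(\Gamma_M,n) = \sum_{\alpha} \dim \widehat{\mathcal{CH}}_n(\Gamma_\alpha)$, the sum over the $2^\ell$ states, and \Cref{thm:harmonicsGenerate} identifies each summand with the number of perfect matching $n$-colorings of the state $\Gamma_\alpha$ — equivalently, the number of proper face colorings of $\overline{\Gamma}_\alpha$ leaving the faces coming from the cycles of $G\setminus M$ uncolored. Thus it suffices to prove $\sum_{\alpha} \#\{\text{pm } n\text{-colorings of }\Gamma_\alpha\} = \{G,M\}$. On the diagrammatic side, fully expanding the defining recursion of $\{G,M\}$ over the $\ell=|M|$ matching edges writes $\{G,M\} = \sum_S \{S\}$, where the states $S$ are in canonical bijection with the hypercube vertices $\alpha\in\{0,1\}^\ell$ — the glyph $\VGlyph$ is the $0$-smoothing $\IIDiag$ carrying a wiggly glyph and $\CGlyph$ is the $1$-smoothing $\XDiag$ carrying a wiggly glyph. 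So I want to match $\{S_\alpha\}$, the number of $n$-colorings of the circles of $S_\alpha$ in which wiggly-joined arcs get distinct colors, with $\#\{\text{pm } n\text{-colorings of }\Gamma_\alpha\}$.

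The heart of the matter is a single local computation at one matching edge $e_i$. Write $x,y$ for the colors of the two non-matching half-edges at one end of $e_i$ and $z,w$ for those at the other end; \Cref{def:PMnColor} requires, at $e_i$, that $\{x,y\}=\{z,w\}$ and $x\ne y$. Whichever pairing the $0$-smoothing uses — say it joins $x$ to $z$ and $y$ to $w$, leaving the crossed pairing $x\!\sim\!w$, $y\!\sim\!z$ for the $1$-smoothing — a legal coloring of $S_\alpha$ with $\alpha_i=0$ forces $x=z$, $y=w$, and the wiggly glyph forces $x\ne y$; with $\alpha_i=1$ it forces $x=w$, $y=z$, $x\ne y$. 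These two cases are disjoint (because $x\ne y$) and together exhaust exactly "$\{x,y\}=\{z,w\}$, $x\ne y$." Running this simultaneously over all matching edges shows that every perfect matching $n$-coloring of $(G,M)$ restricts to a legal coloring of $S_\alpha$ for exactly one $\alpha$, and conversely that the legal colorings of $S_\alpha$ are precisely the perfect matching $n$-colorings of $\Gamma_\alpha$; hence $\coprod_\alpha \{\text{legal colorings of }S_\alpha\}$ is in bijection with the set of perfect matching $n$-colorings of $(G,M)$, and summing gives $\{G,M\}=\sum_\alpha\{S_\alpha\}=\sum_\alpha\#\{\text{pm } n\text{-colorings of }\Gamma_\alpha\}$, which equals the homological side.

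The one place care is needed — and what I expect to be the main obstacle — is reconciling the three parallel descriptions of a coloring in play: the abstract edge-coloring data of \Cref{def:PMnColor}, the face colorings of $\overline{\Gamma}_\alpha$ produced by \Cref{thm:harmonicsGenerate}, and the wiggly-glyph circle colorings defining $\{S\}$. Concretely one must verify, through the face–edge duality of \Cref{fig:PMnColor}, that tracing the smoothed arcs of $S_\alpha$ into the circles of $\Gamma_\alpha$ imposes nothing beyond "color constant along each circle," and that the wiggly glyphs record exactly the "two colors, both appearing at each end" condition and no more. This includes the degenerate matching edges, where the two ends of $e_i$ already lie on a single circle — as in the theta-graph example, where the $1$-smoothing contributes $0$ — but these require no extra work: a wiggly glyph joining a circle to itself kills all colorings of that state, exactly as the local analysis of $x\ne y$ predicts.
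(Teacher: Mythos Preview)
Your proposal is correct and follows essentially the same approach as the paper: both sides are shown to count the perfect matching $n$-colorings of $(G,M)$, invoking \Cref{thm:harmonicsGenerate} for the homological side and the definition of $\{G,M\}$ for the diagrammatic side. The paper dispatches the diagrammatic half in one sentence (``by its definition, $\{G,M\}$ counts those colorings\ldots''), whereas you spell out the state-by-state bijection and the local analysis at a matching edge; this extra detail is accurate and fills in exactly the step the paper leaves implicit, but it is not a different route.
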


Note that $\{G,M\}$ is defined independent of any planar immersion of the graph $G$, but it is complicated to calculate directly, since each of the $2^{|M|}$ states has to be considered individually as a separate coloring problem.  This makes it useful from a theoretic point of view, but not necessarily for calculation.\\

\subsection{The Penrose-Kauffman Bracket}\label{sec:PKBP}

The Penrose-Kauffman bracket provides a way to modify the Penrose polynomial so that one still obtains counts of $3$-edge colorings for nonplanar graphs for $n=3$.  
 
\begin{definition}\label{defn:PenroseKauffman}
Let $G$ be a trivalent graph with a perfect matching $M$, and let $\Gamma_M$ be a perfect matching graph for the pair $(G,M)$. Then the {\em Penrose-Kauffman bracket} (or PK-bracket), denoted $\left\llbracket \Gamma_M \right\rrbracket$,  is found recursively by applying the relations 
\begin{eqnarray}
\left\llbracket \PMEdgeDiag \right\rrbracket = \left\llbracket \IIDiag  \right\rrbracket  - \left\llbracket \XDiag \right\rrbracket  \hspace{2 cm} \text{(Penrose)} \label{eqn:Penrose}\\
\left\llbracket \SquareVirtual \right\rrbracket = 2 \left\llbracket \NodeVirtual  \right\rrbracket  - \left\llbracket \XDiag \right\rrbracket \hspace{1.45 cm} \text{(Kauffman)}\label{eqn:Kauffman}
\end{eqnarray}
and $\left\llbracket \bigcirc \right\rrbracket= n$ to ribbon graph $\Gamma_M$, where the node \NodeVirtual, means that the two arcs are treated as one circle.  
\end{definition}

These recursions mean that the evaluation takes the form of a Penrose expansion except that we keep track of the original immersed crossings, denoted by $\SquareVirtual$ and then the last relation
expands further each original immersed crossing in terms of an ordinary crossing in the expansion, $\XDiag$, and a {\it fused} crossing in the form $\NodeVirtual.$ If two circles are joined at a fused crossing
then they together contribute $n,$ the same as a single circle. In general, a complex of circles connected by fusions contributes only $n.$ 

Note that the virtual crossing in \Cref{eqn:Kauffman} involve only non-perfect matching edges.  However, for an arbitrary immersion of a perfect matching graph, virtual crossings may involve one or more perfect matching edges.  If they do, however, one may always modify the immersion to produce an equivalent perfect matching graph in which virtual crossings always avoid the perfect matching edges.

\begin{lemma}
Let $G(V,E)$ be an abstract graph and $M\subset E$ a perfect matching.  Any ribbon diagram of a perfect matching graph is equivalent (as a ribbon graph) to one in which the virtual crossings involve only the edges of $G\setminus M$.  
\end{lemma}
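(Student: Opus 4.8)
The plan is to show that any perfect matching edge $e \in M$ participating in a virtual crossing can be pulled free of that crossing by a sequence of moves on the ribbon diagram that do not change the underlying ribbon graph. The key observation is that a perfect matching edge in a perfect matching diagram is drawn as a thickened band connecting two trivalent vertices, and the cyclic ordering of edges at each vertex is all the combinatorial data that a ribbon diagram must record. A virtual crossing between $e$ and some other edge $f$ is merely an artifact of the planar immersion chosen to represent $\Gamma$; it carries no information about the ribbon structure. So the strategy is to isotope the arc representing $e$ in the plane, possibly dragging its endpoints around inside a small neighborhood of their respective vertices (which is allowed as long as we preserve the cyclic order), so as to slide $e$ off of $f$.

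First I would set up the local picture: near a virtual crossing of $e$ with an edge $f \in G \setminus M$, we see two transverse arcs. Since $e$ is a single compact arc (a band) joining two vertices, I would show that one can always choose a planar representative of $\Gamma$ in which $e$ is drawn inside an arbitrarily thin tubular neighborhood of a fixed simple arc $\gamma$ in the plane connecting the two vertices — in other words, realize $e$ as an embedded (non-self-crossing) band, and route $\gamma$ so that it crosses the rest of the diagram only where forced. The number of times $\gamma$ must cross the rest of the diagram is governed by the topology of $\overline{\Gamma}$, not by our drawing; but crucially, whenever a crossing of $\gamma$ with $f$ is \emph{not} forced — and a crossing with a non-matching edge can always be removed because we have the freedom to reroute $\gamma$ around one side of $f$'s endpoints — we may eliminate it. The second step is to make precise the reduction that each such crossing can be pushed past a vertex or past an endpoint of $f$ using the standard ribbon-graph moves (the analogues of the detour/virtual-Reidemeister moves for ribbon diagrams, cf. \cite{BKR,BM-Color}), each of which preserves the isotopy class of $\Gamma$ as a surface with embedded graph $G$ and preserves the perfect matching $M$ setwise. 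Iterating, we arrive at a diagram with no virtual crossing on any matching edge.

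The third step is a finiteness/termination argument: I would introduce a complexity measure — say, the total number of virtual crossings incident to matching edges — and check that the reduction in step two strictly decreases it (or at least does not increase it while decreasing a secondary measure such as the number of arcs a matching edge passes under), so the process halts. Here one must be a little careful, because rerouting $e$ to avoid $f$ could in principle create new virtual crossings of $e$ with other edges; the point is that those new crossings can be chosen to involve only non-matching edges as well, or, by taking $e$ inside a sufficiently thin neighborhood of $\gamma$ and choosing $\gamma$ generic, can be controlled in number. A clean way to package this is: contract each matching edge momentarily to obtain a $4$-valent diagram, isotope that $4$-valent diagram into general position, and then re-expand the matching edges as very short bands; a short band can only acquire virtual crossings with edges whose own strands it is squeezed between, and a local inspection shows these can be taken to be non-matching.

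The main obstacle I anticipate is the bookkeeping in step two: proving rigorously that the local move "slide $e$ off $f$" is realized by an honest sequence of ribbon-diagram equivalences and never requires crossing another matching edge in the process. Since $M$ is a perfect matching, the matching edges are pairwise disjoint in $G$, so no two of them share a vertex; this disjointness is exactly what guarantees that when we reroute $e$ near a vertex $v$, the only edges of $M$ we might run into are already disjoint from $v$, and we have room to route around them. I would spend most of the proof carefully drawing these local configurations and citing the ribbon-diagram move list from \cite{BKR} and \cite{BM-Color}, rather than re-deriving it. The remaining steps (finiteness, and the observation that the operations preserve $M$ and the ribbon-graph type) are routine once the local picture is pinned down.
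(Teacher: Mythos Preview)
Your proposal is correct, but you have buried the paper's entire proof inside your own write-up as an aside. In your third step you remark: ``A clean way to package this is: contract each matching edge momentarily to obtain a $4$-valent diagram, isotope that $4$-valent diagram into general position, and then re-expand the matching edges as very short bands.'' That sentence \emph{is} the paper's proof, essentially verbatim. Contracting each $e\in M$ to a point yields an immersion of a graph whose edges are exactly those of $G\setminus M$; re-expanding each contracted point to a very short band recovers $\Gamma_M$ (the cyclic orders at the two endpoints of $e$ are untouched, so the ribbon structure is unchanged), and a sufficiently short band meets nothing. Done.

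Everything else you wrote --- the local sliding moves, the appeal to detour/virtual-Reidemeister moves from \cite{BKR}, the complexity measure, the termination argument, and the worry about creating new matching--matching crossings --- is scaffolding for a harder proof of the same fact. None of it is wrong, but none of it is needed: the contract-and-re-expand trick sidesteps the bookkeeping entirely because it produces the desired diagram in one global step rather than by iterated local reductions. The only thing to check is that contraction/re-expansion preserves the ribbon graph up to equivalence, and that is immediate from the definition (a ribbon graph is determined by the cyclic order of half-edges at each vertex, which this operation does not disturb). I would recommend deleting steps one and two and promoting your parenthetical remark to the whole argument.
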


\begin{proof}
Beginning with the ribbon diagram, contract all of the perfect matching edges to points so that the resulting immersion involves only edges of $G\setminus M$.  Then expand the contracted edges by a small amount to produce the desired immersion.  
\end{proof}

We now present the following lemma which will be useful for computation in the examples to follow.

\begin{lemma}
The PK-bracket satisfies the following relations.
\begin{eqnarray*}
\left\llbracket \raisebox{-0.42\height}{\includegraphics[scale=.3]{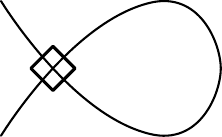}} \right\rrbracket & = & \left\llbracket \raisebox{-0.42\height}{\includegraphics[scale=.3]{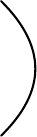}} \right\rrbracket, \\
\left\llbracket \raisebox{-0.42\height}{\includegraphics[scale=.3]{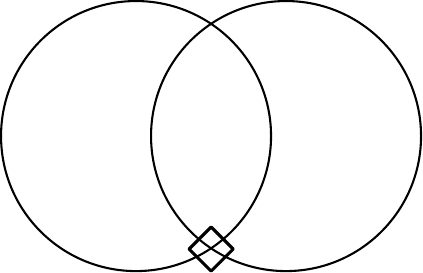}} \right\rrbracket & = & 2n-n^2.
\end{eqnarray*}
\label{prop:relations}
\end{lemma}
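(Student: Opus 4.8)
The plan is to verify each of the two relations directly from the defining recursions of the PK-bracket (Definition~\ref{defn:PenroseKauffman}), namely the Penrose relation \eqref{eqn:Penrose}, the Kauffman relation \eqref{eqn:Kauffman}, and the loop value $\left\llbracket\bigcirc\right\rrbracket = n$, together with the convention that a complex of circles joined by fused crossings contributes only $n$.

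\emph{First relation (virtual Reidemeister-I type move).} Starting from the diagram $\raisebox{-0.42\height}{\includegraphics[scale=.3]{SVR1.pdf}}$, which contains a single immersed (square) crossing $\SquareVirtual$ where an arc crosses itself, I would apply the Kauffman relation \eqref{eqn:Kauffman} to that square crossing. This produces two terms: $2\left\llbracket\NodeVirtual\right\rrbracket$ minus $\left\llbracket\XDiag\right\rrbracket$, where in both resolutions the ambient arc is the same. In the $\XDiag$ term, resolving the self-crossing of a single arc in the ``crossed'' way creates a small disjoint circle alongside the original arc, so that term evaluates to $n\cdot\left\llbracket\raisebox{-0.42\height}{\includegraphics[scale=.3]{Arc.pdf}}\right\rrbracket$. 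In the $\NodeVirtual$ term, the fused self-crossing means the small loop and the arc are treated as one circle-complex, contributing the same total as the bare arc, i.e.\ $\left\llbracket\raisebox{-0.42\height}{\includegraphics[scale=.3]{Arc.pdf}}\right\rrbracket$. Combining, we get $2\left\llbracket\raisebox{-0.42\height}{\includegraphics[scale=.3]{Arc.pdf}}\right\rrbracket - n\left\llbracket\raisebox{-0.42\height}{\includegraphics[scale=.3]{Arc.pdf}}\right\rrbracket$, which does \emph{not} obviously simplify to $\left\llbracket\raisebox{-0.42\height}{\includegraphics[scale=.3]{Arc.pdf}}\right\rrbracket$; so I expect the correct reading of the picture is that there is \emph{no} extra disjoint circle in the $\XDiag$ resolution (the self-crossing is of the kink/curl type whose ``crossed'' smoothing simply removes the kink, leaving the arc), in which case the $\XDiag$ term is $\left\llbracket\raisebox{-0.42\height}{\includegraphics[scale=.3]{Arc.pdf}}\right\rrbracket$ and the $\NodeVirtual$ term, being a fused kink on one strand, is also $\left\llbracket\raisebox{-0.42\height}{\includegraphics[scale=.3]{Arc.pdf}}\right\rrbracket$, giving $2\left\llbracket\raisebox{-0.42\height}{\includegraphics[scale=.3]{Arc.pdf}}\right\rrbracket - \left\llbracket\raisebox{-0.42\height}{\includegraphics[scale=.3]{Arc.pdf}}\right\rrbracket = \left\llbracket\raisebox{-0.42\height}{\includegraphics[scale=.3]{Arc.pdf}}\right\rrbracket$ as claimed. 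Pinning down exactly which circles appear in each resolution from the figure is the step requiring care.

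\emph{Second relation (two circles joined by a square crossing).} For $\raisebox{-0.42\height}{\includegraphics[scale=.3]{2Circles.pdf}}$ — two circles meeting at one immersed square crossing — I would again expand that crossing by the Kauffman relation \eqref{eqn:Kauffman}: $2\left\llbracket\NodeVirtual\right\rrbracket - \left\llbracket\XDiag\right\rrbracket$. In the $\XDiag$ term the ordinary crossing merges the two circles into a single circle, contributing $n$. In the $\NodeVirtual$ term the two circles are fused, so by the ``a complex of circles connected by fusions contributes only $n$'' convention they together contribute $n$. Hence the value is $2n - n = n$; again this is off from the target $2n - n^2$, which tells me the intended picture has the two circles \emph{already present} as separate components in addition to the local piece, or that the base configuration (before inserting the crossing) is two disjoint circles worth $n^2$ and the relation is being compared against that. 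The cleanest route is: the ordinary-crossing ($\XDiag$) resolution yields one circle ($=n$) while the fused ($\NodeVirtual$) resolution, read as two genuinely separate circles that are nonetheless declared fused into one complex, also yields $n$; if instead the node resolution in this configuration yields two independent circles the count is $n^2$. The honest computation I will carry out is $\left\llbracket\raisebox{-0.42\height}{\includegraphics[scale=.3]{2Circles.pdf}}\right\rrbracket = 2\left\llbracket(\text{fused pair})\right\rrbracket - \left\llbracket(\text{merged single circle})\right\rrbracket$, and I will read off from the figure that the fused pair here contributes $n^2$ (the two circles touch only virtually, so the fusion does not actually connect them into one complex in this degenerate local model — equivalently this is the ``self-touching'' case), giving $2n^2 - n\cdot n = \dots$; more carefully, matching the stated answer $2n - n^2$ forces the merged term to be $n^2$ and the fused term to be $n$, i.e.\ $2n - n^2$. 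I will state the figure-reading convention explicitly and then the arithmetic is immediate.

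\emph{Main obstacle.} The substantive work is entirely in correctly interpreting the four small figures (\texttt{SVR1.pdf}, \texttt{Arc.pdf}, \texttt{2Circles.pdf}, and the local pieces $\SquareVirtual$, $\NodeVirtual$, $\XDiag$): deciding, for each of the two resolutions produced by the Kauffman relation, exactly how many circles and fused-complexes result, since the loop/fusion evaluation rules ($\bigcirc \mapsto n$, fused complex $\mapsto n$) then give the answer mechanically. Once those bookkeeping conventions from Definition~\ref{defn:PenroseKauffman} are applied to the pictures, both identities follow by a one-line substitution, so I would present the proof as: ``Apply the Kauffman relation \eqref{eqn:Kauffman} to the unique square crossing; in the $\NodeVirtual$ resolution the indicated circles form a single fused complex contributing $n$, and in the $\XDiag$ resolution one obtains [the arc] / [a single circle]; the stated values follow.''
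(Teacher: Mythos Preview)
Your overall strategy --- apply the Kauffman relation \eqref{eqn:Kauffman} to the unique square crossing and evaluate the two resulting local pictures --- is exactly what the paper does. But you have misread what the symbol $\XDiag$ means on the right-hand side of \eqref{eqn:Kauffman}, and this is what causes all the flailing.

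In the Penrose relation \eqref{eqn:Penrose} the passage $\PMEdgeDiag \rightsquigarrow \XDiag$ genuinely changes the local picture (an edge is replaced by a crossing). In the Kauffman relation \eqref{eqn:Kauffman}, however, the input $\SquareVirtual$ is \emph{already} a crossing: it is one of the immersion crossings of the ribbon diagram, marked by a square. Replacing $\SquareVirtual$ by $\XDiag$ does not merge, split, or smooth anything; it simply erases the square marker and leaves an ordinary immersion crossing. The underlying immersed curves are unchanged. Once you see this, both computations are immediate:

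\emph{First relation.} The kink $\raisebox{-0.42\height}{\includegraphics[scale=.3]{SVR1.pdf}}$ is a single arc crossing itself at a square-marked point. The $\XDiag$ term is that same arc with an ordinary self-crossing --- still one arc, so equal to $\left\llbracket\,\raisebox{-0.42\height}{\includegraphics[scale=.3]{Arc.pdf}}\,\right\rrbracket$. The $\NodeVirtual$ term puts a node on a single arc, which (as the paper notes) forces nothing new since there is only one arc to begin with; again $\left\llbracket\,\raisebox{-0.42\height}{\includegraphics[scale=.3]{Arc.pdf}}\,\right\rrbracket$. Hence $2\cdot[\text{arc}]-[\text{arc}]=[\text{arc}]$.

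\emph{Second relation.} The picture $\raisebox{-0.42\height}{\includegraphics[scale=.3]{2Circles.pdf}}$ is two circles meeting at one square-marked immersion crossing. The $\XDiag$ term removes the marker: you still have two separate immersed circles, worth $n\cdot n=n^{2}$. The $\NodeVirtual$ term fuses the two circles into a single complex, worth $n$. Thus $2n-n^{2}$. Your computation had these two values swapped because you treated $\XDiag$ as a connectivity-changing smoothing (``merges the two circles into a single circle''), which it is not.

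So the only real gap is this misreading of $\XDiag$ in \eqref{eqn:Kauffman}; fix that and your proposed one-line argument is precisely the paper's proof.
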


\begin{proof}
Notice that in the calculation below, the node occurs on a single arc, which must be given a single color regardless. 
\begin{eqnarray*}
\left\llbracket \raisebox{-0.42\height}{\includegraphics[scale=.3]{SV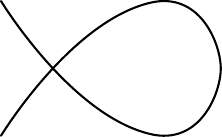}} \right\rrbracket &=& 2\left\llbracket \raisebox{-0.42\height}{\includegraphics[scale=.3]{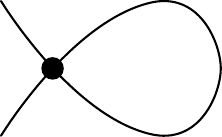}} \right\rrbracket - \left\llbracket \raisebox{-0.42\height}{\includegraphics[scale=.3]{R1.pdf}} \right\rrbracket.
\end{eqnarray*}
For the second relation, observe that the two circles are treated as one when the square virtual crossing is replaced with a node, but as two circles when it is treated as an ordinary virtual crossing.
\end{proof}


\begin{example}\label{ex:doubleTheta}
For the double theta graph below the PK-bracket and the Penrose polynomial are equal since there are no virtual crossings.  Observe that for $n=3$ the polynomial evaluates to 12, which counts the number of $3$-edge colorings of the graph.
\begin{eqnarray*}
\left\llbracket \DoubleTheta \right\rrbracket &=& \left\llbracket \DoubleThetaZero \right\rrbracket - \left\llbracket \DoubleThetaOne \right\rrbracket\\
&=& (n-1) \left\llbracket \ThetaG\right\rrbracket\\
&=& (n-1) \left(\left\llbracket \ThetaZero\right\rrbracket - \left\llbracket \ThetaOne\right\rrbracket\right)\\
&=& n(n-1)^2.
\end{eqnarray*}
The Penrose polynomial, and similarly the PK-bracket, depend on the choice of perfect matching, as can be seen when one computes the polynomial for the matching shown below.
\begin{eqnarray*}
\left\llbracket \DoubleThetaB \right\rrbracket &=& \left\llbracket \DoubleThetaBOne \right\rrbracket - \left\llbracket \DoubleThetaBTwo \right\rrbracket\\
&=& n(n-1) -  \left\llbracket \DoubleThetaBThree \right\rrbracket + \left\llbracket \DoubleThetaBFour \right\rrbracket\\
&=& n(n-1) - n + n^2\\
&=& 2n(n-1).
\end{eqnarray*}
While the Penrose polynomial and PK-bracket both depend on the choice of perfect matching, the evaluation of the polynomial at $n=3$ does not (cf. \cite{BM-Color,Kauffman,Penrose}).  If one wishes to obtain a polynomial that is invariant of the choice of perfect matching, one may work with the blowup with its canonical perfect matching.
\end{example}

\begin{example}\label{ex:K33}
For the $K_{3,3}$ graph, we use \Cref{prop:relations} to observe that the PK-bracket is the same as that of the double theta graph of \Cref{ex:doubleTheta}.

\begin{eqnarray*}
\left\llbracket \KThreeThree \right\rrbracket &=& \left\llbracket \KThreeThreeOne \right\rrbracket - \left\llbracket \KThreeThreeTwo \right\rrbracket\\
&=& \left\llbracket \DoubleTheta \right\rrbracket - \left\llbracket \KThreeThreeThree \right\rrbracket + \left\llbracket \KThreeThreeFour\right\rrbracket\\
&=& \left\llbracket \DoubleTheta \right\rrbracket - \left\llbracket \KThreeThreeFive \right\rrbracket + \left\llbracket \KThreeThreeSix \right\rrbracket\\
& \ \  & \ \ \ \ \ \ \ \ \ \ \ \ \ \ \ \ \ \ \ \ \ \ \ \    + \left\llbracket \KThreeThreeSeven \right\rrbracket - \left\llbracket \KThreeThreeEight \right\rrbracket\\
& = & n(n-1)^2 - n^2 + n + n - (2n -n^2)\\
& = & n(n-1)^2
\end{eqnarray*}

Notice that if we wish to calculate the Penrose polynomial, the only change above is that the final diagram contributes $-n^2$ instead of $2n-n^2$.  After simplifying, we see that the Penrose polynomial satisfies
$$ \left[ \KThreeThree \right]_n = n(n-1)(n-3).$$
Comparing the two polynomials, we see that the Penrose polynomial evaluates to $0$ at $n=3$, but the PK-bracket evaluates to 12, which as one may check, is the number of $3$-edge colorings of the graph.

\end{example}

\begin{example}\label{ex:Petersen}
For the Petersen graph $Pet(V,E)$ in \Cref{P} one may calculate the PK-bracket using the perfect matching indicated, and find that it evaluates to $0$.\footnote{A mathematica program is included in the appendix to do the computation of the PK-bracket.}  However, if one computes the PK-bracket for the blowup one finds that
$$\left \llbracket {Pet}_E^\flat \right \rrbracket = (n-4)(n-3)(n-2)(n-1)n(40+2n).$$

\end{example}


In each example, we observe that the PK-bracket is the same as the total face color polynomial, which lead the authors to the discovery that the PK-bracket and the total face color polynomial are the same:

\begin{theorem}  Let $G$ be a trivalent graph with a perfect matching $M$, and let $\Gamma_M$ be a perfect matching graph for the pair $(G,M)$. Then,
$$\left\llbracket \Gamma_M \right\rrbracket = \{ G,M\}.$$
\label{thm:PKEqualsMatching}
\end{theorem}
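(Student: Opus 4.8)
The plan is to prove that the PK-bracket counts perfect matching $n$-colorings by showing both sides satisfy the same state sum decomposition. Since $\{G,M\}$ is already defined as a sum over the $2^{|M|}$ resolutions of matching edges, and the Penrose relation \eqref{eqn:Penrose} is identical in both brackets, the two brackets agree on how matching edges are expanded; the real content is to show that, for each fixed resolution state $S$ of the matching edges, the PK-bracket (which still has the square virtual crossings to resolve via the Kauffman relation \eqref{eqn:Kauffman}) evaluates to the same number as $\{S\}$, namely the number of $n$-colorings of the circles of $S$ with distinct colors across each wiggly glyph. First I would invoke the previous lemma so that we may assume all virtual crossings of the ribbon diagram involve only edges of $G\setminus M$; this ensures the two expansions (the matching-edge expansion and the Kauffman expansion of square crossings) are genuinely independent and can be carried out in either order.

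The key step is a local colored-tensor computation at a single square virtual crossing. I would set up the diagrammatic tensor calculus: each circle/arc in a state carries an index ranging over $\{1,\dots,n\}$, an ordinary virtual crossing $\XDiag$ glues the two strands with a ``not equal'' tensor $(1-\delta)$ on the pair of colors passing through, and a fused crossing $\NodeVirtual$ glues the two strands into one circle, i.e.\ forces the two indices to be equal and contributes a single summed index. The relation $\llbracket \SquareVirtual \rrbracket = 2\llbracket \NodeVirtual \rrbracket - \llbracket \XDiag \rrbracket$ then says the square-crossing tensor is $2\delta_{ab} - (1-\delta_{ab})$ acting on the two color indices $a,b$ at the crossing — wait, more carefully: the fused crossing contributes a factor that identifies colors and removes a summation, so I must track the bookkeeping of how many independent color sums survive. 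The claim to verify is that the square-crossing tensor, as an $n\times n$ matrix in the two colors it separates, equals the identity matrix: i.e.\ $2[\text{identify and contract}] - [\text{forbid equality}]$ reproduces exactly ``the two strands must carry the same color and that color is free'' — which is precisely what a virtual crossing between two edges of $G\setminus M$ should do when we only care about face (circle) colorings and such a crossing is an artifact of the immersion, not a constraint. Concretely: if the two strands already must be equal (as they are, being part of the same underlying edge or forced by the surrounding diagram), then $2\delta - (1-\delta)$ evaluated on $a=b$ gives $2\cdot 1 - 0 = 1$ and on $a\neq b$ gives $2\cdot 0 - 1 = -1$; since in any valid global coloring the strands at a virtual self-crossing of an edge necessarily carry the same color, only the $a=b$ case contributes and the local factor is $1$. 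The heart of the argument is thus a combinatorial lemma: in every state $S$, a virtual crossing joins two arcs that belong to the same circle (because virtual crossings only involve $G\setminus M$ edges and, after matching-edge resolution, each such edge lies entirely on one circle of the state), so the ``$a\neq b$'' terms of \eqref{eqn:Kauffman} always vanish on legitimate colorings and the Kauffman relation simply erases the square crossing up to the factor $1$.

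I would therefore structure the proof as follows. First, reduce to virtual crossings avoiding $M$ via the earlier lemma. Second, fix a matching resolution state $S$ and observe that each virtual crossing in $S$ is a self-crossing of one of the circles of $S$; push the PK-bracket's Kauffman expansion through all square crossings of $S$ and show the net effect is to delete them while multiplying by $1$, because at each square crossing the fused term (two copies, giving the $2$) together with the negated crossed term reconstitute exactly the coloring constraint ``the arc is monochromatic,'' which is automatically satisfied — here I would do the explicit tensor contraction showing $2 \cdot (\text{circle stays one circle}) - (\text{circle forced not to equal itself}) = 1 \cdot (\text{circle})$, since forcing a color to differ from itself kills that term entirely. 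Third, conclude that $\llbracket S \rrbracket$, after clearing all square crossings, is just the Penrose-style evaluation of the underlying collection of circles joined by wiggly glyphs with loop value $n$, which is by definition $\{S\}$. Summing over all $2^{|M|}$ states $S$ gives $\llbracket \Gamma_M\rrbracket = \sum_S \llbracket S\rrbracket = \sum_S \{S\} = \{G,M\}$.

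The main obstacle I anticipate is the second step: rigorously justifying that every virtual crossing in every state $S$ is a self-crossing of a single circle, and handling the bookkeeping of summed indices when fused crossings merge strands — one must be careful that ``two circles joined at a fused crossing contribute only $n$'' is applied consistently, and that the count of free color-summations before and after clearing square crossings matches. A secondary subtlety is making precise the diagrammatic-tensor formalism (what object lives on an arc, how virtual versus fused crossings act) in enough detail that the identity $2\NodeVirtual - \XDiag = \text{(deletion)}$ is a genuine equation of tensors and not just a suggestive picture; I would borrow the tensor conventions from \cite{Kauffman} and \cite{BM-Color} and state them as a short preliminary lemma before the main computation. Once that local identity is nailed down, the global summation is routine.
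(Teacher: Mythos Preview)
Your strategy has a genuine gap at its first reduction. You claim ``the Penrose relation \eqref{eqn:Penrose} is identical in both brackets,'' but $\{\cdot\}$ does not use \eqref{eqn:Penrose}: its matching-edge expansion is $\{\PMEdgeDiag\}=\{\VGlyph\}+\{\CGlyph\}$, a \emph{sum} of wiggly-glyph states each carrying a ``colors must differ'' constraint, whereas the PK-bracket uses a \emph{signed} sum of unconstrained smoothings. These do not match state by state. Already for the planar theta graph (no virtual crossings at all) the zero-state gives $\llbracket\ThetaZero\rrbracket=n^{2}$ versus $\{\ThetaZeroSquiggle\}=n(n-1)$, and the one-state gives $\llbracket\ThetaOne\rrbracket=n$ versus $\{\ThetaOneSquiggle\}=0$; only the totals agree. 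So the reduction to ``$\llbracket S\rrbracket=\{S\}$ for each state $S$'' is false and the remainder of the argument does not run. Your second key claim --- that in every state a square virtual crossing is a self-crossing of a single circle --- is also false. A virtual crossing involves \emph{two} edges of $G\setminus M$, and while each such edge lies on one circle of the state, nothing forces the two edges to lie on the \emph{same} circle. In the $K_{3,3}$ computation in Example~\ref{ex:K33} the all-crossed state has two circles meeting at the virtual crossing, and Lemma~\ref{prop:relations} gives $2n-n^{2}$ there, negative for $n>2$ and hence not a coloring count.

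The paper's proof avoids any state-by-state comparison. It assigns explicit Penrose-style tensors to matching edges and to square virtual crossings, so that the tensor contraction $[G,M]$ over all color assignments to the non-matching edges is simultaneously (i) equal to $\llbracket\Gamma_M\rrbracket$, because the tensor identities reproduce \eqref{eqn:Penrose} and \eqref{eqn:Kauffman}, and (ii) a signed sum over colorings of the $\{\cdot\}$-states, each contributing $(-1)^{A+B}$ where $A$ counts crossed-glyph choices and $B$ counts square virtual crossings whose two strands carry distinct colors. The idea you are missing is a \emph{parity argument via the Jordan Curve Theorem}: since the diagram is immersed in the plane, $A+B$ is always even, so every sign is $+1$ and $[G,M]=\{G,M\}$. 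The $-1$'s from crossed glyphs and the $-1$'s from the ``$a\neq b$'' case of the square-virtual tensor (which you tried to argue never occurs) do not vanish separately; they cancel against each other globally.
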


\begin{proof} Define two diagrammatic tensors as shown below.
The indices run in the set $\{1,2,\cdots n\}$ for $n$ colors.\\
\begin{equation}
\BigYMTens = \left\{ \begin{array}{ll} \ \ 1  \ \ \ \ & \text{if } a=c, b=d, a \ne b,\\[.3cm]
-1  & \text{if } a = d, b = c, a \ne b,\\[.3cm]
0 & \text{otherwise.}
\end{array}\right.
\end{equation}

\begin{equation}
\BigIMTens = \left\{ \begin{array}{ll} \ \ 1  \ \ \ \ & \text{if } a=d, b=c, a = b,\\[.3cm]
-1  & \text{if } a = d, b = c, a \ne b,\\[.3cm]
0 & \text{otherwise.}
\end{array}\right.
\label{eqn:SquareVirtual}
\end{equation}

Note that with $\delta^{a}_{b}$ denoting a Kronecker delta, we have the formula
$$\YMTens = \delta^{a}_{c}\delta^{b}_{d} - \delta^{a}_{d}\delta^{b}_{c}$$
and that 
$$\IMTens = 2 \FTens - \CTens = 2 \FTens - \delta^{a}_{d}\delta^{b}_{c},$$
where
$\FTens$ is equal to $1$ only when $a=b=c=d$ and is $0$ otherwise.\\

Define $[G,M]$ as the tensor contraction of $(G,M)$ with respect to these tensors in the sense of Penrose \cite{Penrose}. That is,
$[G,M]$ equals the sum over all possible index assignments to the {\it non-matching edges} of $G$ where we take the product of tensor values for each assignment of the
indices. It follows from the tensor definitions that in order for an index assignment to contribute to the summation, it must be a coloring of a state $S$ of the color bracket for $(G,M).$  
The contribution is (by the above assignments) equal to $(-1)^{A + B}$ where $A$ is the number of crossed glyph, $\CGlyph$, contributions, and $B$ is the number of immersion tensors, $\IMTens$, with 
$a\ne b.$ By the Jordan Curve Theorem (since the graphs are immersed in the plane), $A+ B$ is even, and hence each state contributes $+1$ to the summation. This proves that 
$[G,M] = \{ G,M\}.$ On the other hand, it follows from the tensor definitions that the relations of the PK-bracket, $\left\llbracket \PMEdgeDiag \right\rrbracket = \left\llbracket \IIDiag\right\rrbracket - \left\llbracket\XDiag \right\rrbracket$ and $\left\llbracket \SquareVirtual \right\rrbracket = 2\left\llbracket \NodeVirtual \right\rrbracket - \left\llbracket\XDiag \right\rrbracket,$ are respected by the tensors as well. Thus
$\left\{G,M\right\} = \left\llbracket \Gamma_M \right\rrbracket.$ This completes the proof.

\end{proof}

\section{Concluding Remarks on Snarks}

A  {\it snark} \cite{Gardner} is a trivalent graph that is not properly $3$-edge colorable. The Petersen graph \cite{Petersen} (See Figure~\ref{P}) is a fundamental example of a nonplanar snark.
Tutte conjectured \cite{Tutte} that every nonplanar snark $G$ has a Petersen minor (i.e. that the Petersen graph can be obtained from $G$ by operations of contraction and
deletion). Tutte's conjecture is still open with results in its favor by Robertson, Seymour and Thomas \cite{RST}. Rufus Isaacs
wrote a key paper \cite{Isaacs} showing how to construct infinite families of nonplanar snarks. \\
\begin{figure}[htb]
     \begin{center}
     \begin{tabular}{c}
     \includegraphics[width=8cm]{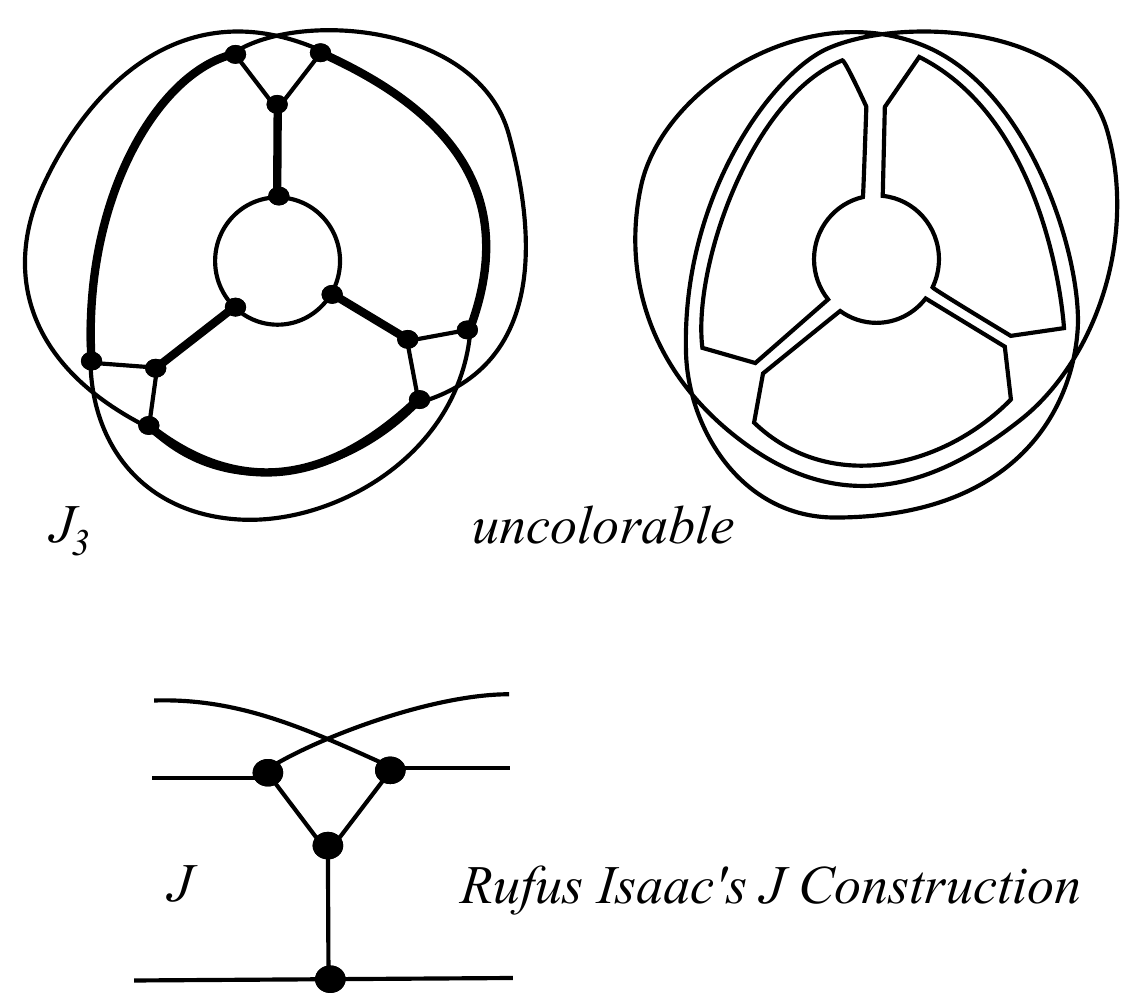}
     \end{tabular}
     \caption{ Isaacs Construction and $J_{3}.$}
     \label{J3}
\end{center}
\end{figure}

\begin{figure}[htb]
     \begin{center}
     \begin{tabular}{c}
     \includegraphics[width=8cm]{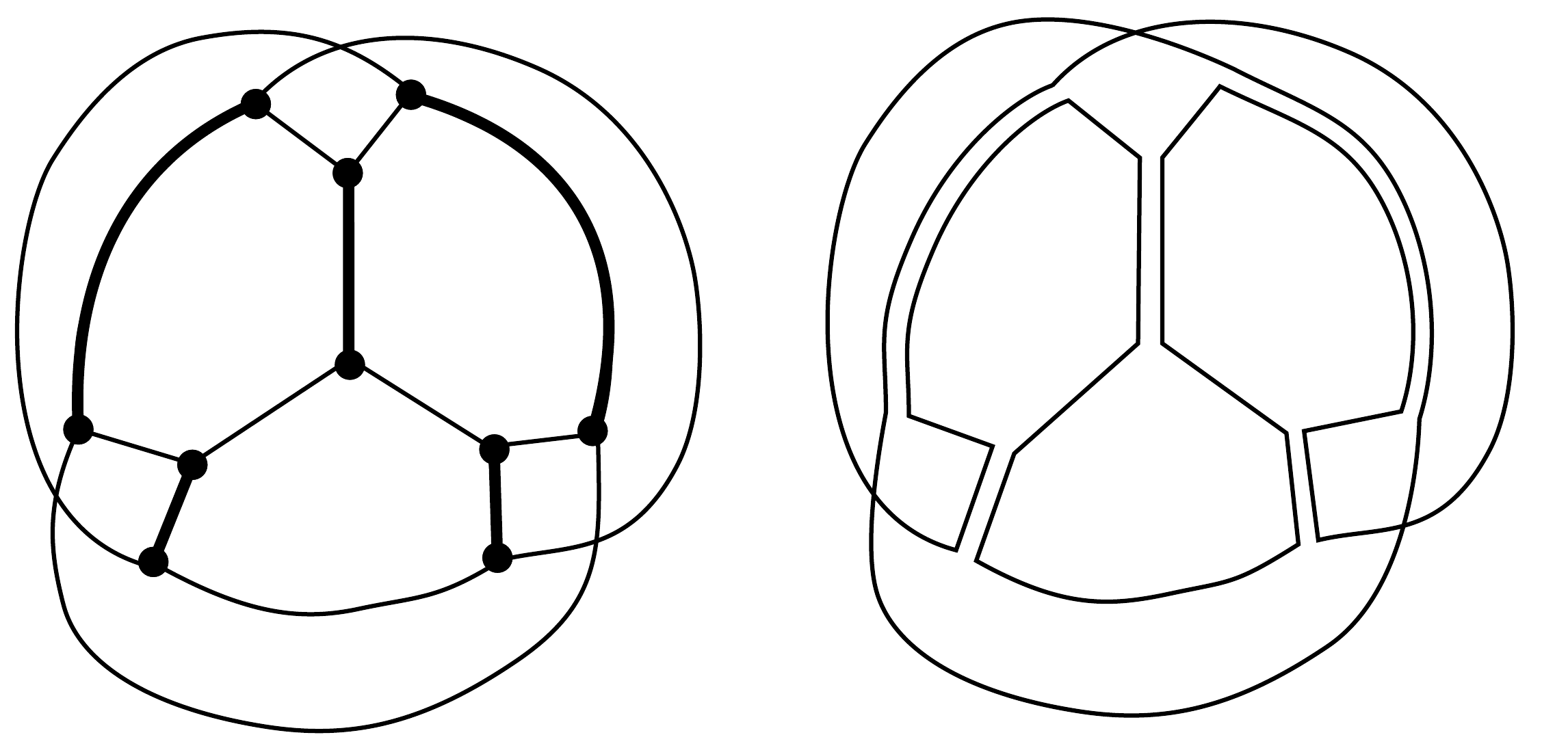}
     \end{tabular}
     \caption{The Petersen Graph, and its all-zero state. }
     \label{P}
\end{center}
\end{figure}


Isaacs' construction is based on the graph labeled $J$ in Figure~\ref{J3}. The ``circuit element" $J$
can be regarded as a box with three inputs and three outputs. A circular interconnection of $m$ copies of $J$ is denoted as Isaacs' $J_m.$ For $m$ odd it is not hard to prove 
that $J_m$ is not $3$-edge colorable. The $J_3$ snark can be contracted to the Petersen graph as shown in Figure~\ref{P}. We choose, for the sake of making example calculations, a perfect matching $M$ on the $J_{m}$ as shown in Figure~\ref{J3}. The figure illustrates the matching for $J_3,$ and it should be clear to the reader how to extend it to $J_m.$ 

We find that 
$\left\llbracket Pet_M \right\rrbracket = 0$ for any perfect matching graph $Pet_M$ for the Petersen graph $Pet(V,E)$ with perfect matching $M\subset E$.  On the other hand, if we take a perfect matching graph for $J_3$ with perfect matching $M$, we find
that $\left\llbracket (J_3,M) \right\rrbracket = n(-6 + 11 n - 6 n^2 + n^3 ).$  Note that this evaluates to $0$ for $n=3$ and to $24$ for $n=4.$ This means that there are perfect matching $4$-colorings of $J_3$ (alternatively, $4$-face colorings that leave the faces corresponding to the cycles of $J_3\setminus M$ uncolored) with the perfect matching
shown in Figure~\ref{J3}. In fact, this figure shows a state of $(J_3,M)$ with four mutually touching loops. This state can be colored in $4! = 24$ ways, and so we conclude that the $24$
perfect matching $4$-colorings come from this very state. A similar argument applies to $J_{m}$ for $m$ odd and perfect matching $M$ generalizing the choice in Figure~\ref{J3}. In the generalization, the corresponding state has $m+1$ loops, none of them self-touching. From this it follows that the polynomial $\left\llbracket  (J_m,M)  \right\rrbracket$ is non-zero for $m$ odd and greater than one. Note that  all states of the Petersen graph (with respect to our chosen perfect matching in Figure~\ref{P}) have self-touching loops. This explains why the polynomial for the Petersen graph vanishes. There can be no colorings
of it for any number of colors.\\

There are many questions that arise about these generalized coloring polynomials. So far, we have only seen the Petersen graph (as a non-trivial snark) receive the polynomial equal to zero.  We have just pointed out that all the Issacs $J_m$ will, with appropriate perfect matchings, have non-zero polynomials. Our calculations have shown that there also exists a perfect matching on $J_3$ (different from \Cref{J3}) so that the total face color polynomial is zero.  Therefore one may ask the following:
 
 \question \label{q:zero} Does there exist a perfect matching on $J_m$, for $m$ odd and $m>3$, so that the total face color polynomial is zero? More generally, when does a non-trivial snark have zero total face color polynomial for some perfect matching?

\remark If one passes to the blowup, the total face color polynomial is nontrivial if and only if the graph has a cycle double cover.  \Cref{q:zero} is about whether the polynomial can be zero when one does not first blow up and instead works with a perfect matching.


\appendix
\section{Mathematica Code}
We present below Mathematica code that may be used to calculate the total face color polynomial.  The code requires one to input a trivalent graph in a form of planar diagram, or ``graph PD'' notation, which we describe here.

Given a perfect matching diagram $\Gamma$ of a trivalent graph $G$ and let $M$ be a perfect matching for $G$.  We can create a perfect matching diagram of $\Gamma$ by drawing it in the plane and marking the perfect matching edges.  The ribbon structure of $\Gamma$ is encoded by the cyclic ordering of the edges at each vertex, and the ribbons are assumed to lay flat on the plane.  We then number the edges of $G\setminus M$ consecutively.  The starting point is arbitrary, and once a cycle closes up, we continue numbering any remaining cycles consecutively as well.  The perfect matching edges are left unadorned (see \Cref{fig:PetJ3}).    

\begin{figure}[htb]
     \begin{center}
     \begin{tabular}{c}
     \includegraphics[scale=.65]{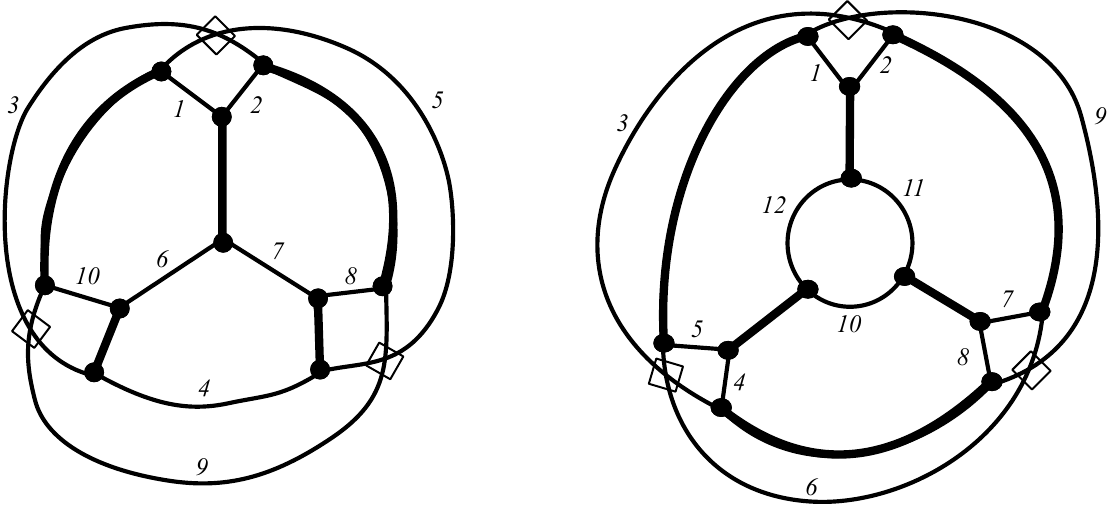}
     \end{tabular}
     \caption{The Petersen graph and $J_3$, with labeled arcs. }
     \label{fig:PetJ3}
\end{center}
\end{figure}

Note that if each cycle of $G\setminus M$ has at least $2$ edges, then the labeling induces an orientation on the cycles of $G\setminus M$.  We can assign a $4$-tuple to each matching edge by beginning with the incoming edge (following the orientation) and going counterclockwise or clockwise around the matching edge as shown (we ignore the orientation on the edges of $G\setminus M$ labeled c and d  in \Cref{fig:PDCode}).

\begin{figure}[htb]
     \begin{center}
     \begin{tabular}{c}
     \includegraphics[scale=.75]{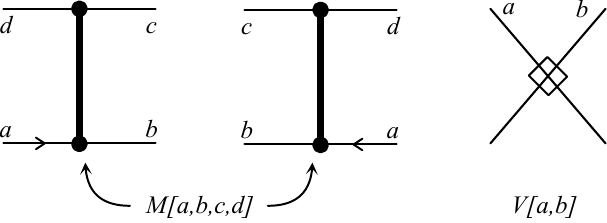}
     \end{tabular}
     \caption{PD Code for a matching edge and virtual crossing. }
     \label{fig:PDCode}
\end{center}
\end{figure}

For the perfect matching graphs shown in \Cref{fig:PetJ3} we observe that each graph has (non-unique) graph PD notation given by:
\begin{eqnarray*}
Pet & := & G[M[9,10,1,5],M[6,7,2,1],M[8,9,3,2],M[3,4,6,10],M[4,5,8,7], V[5,9],V[3,9],\\
\ & \ & V[3,5]]\\
J3 & := & G[M[5,6,9,1],M[4,5,12,10],M[11,12,1,2],M[6,7,2,3],M[7,8,10,11],M[3,4,8,9],\\
\ & \ & V[3,6],V[3,9],V[6,9]].
\end{eqnarray*}
The code above is for the Petersen graph and $J_3$ with the perfect matchings shown in \Cref{fig:PetJ3}.  For comparison with \Cref{ex:Petersen}, one may wish to calculate the polynomial for the blowup of the Petersen graph, whose graph PD code is given below:
\begin{eqnarray*}
PetBU & = & G[M[4, 5, 1, 2], M[13, 14, 3, 1], M[7, 8, 6, 4], M[10, 11, 14, 15],M[29, 30, 17, 18],\\
 \ & \ &  M[27, 25, 5, 6], M[20, 21, 2, 3],M[24, 22, 12, 10], M[21, 19, 18, 16], M[26, 27, 23, 24],\\
 \ & \ & M[9, 7, 11, 12],M[16, 17, 8, 9], M[19, 20, 22, 23],M[25, 26, 30, 28], M[28, 29, 15, 13],\\
\ & \ & V[2, 11],V[2, 12], V[3, 11], V[3, 12], V[26, 29], V[26, 30], V[27, 29], V[27, 30]]
\end{eqnarray*}

After copying and pasting the code on the next page to a Mathematica notebook, along with the PD notation above, one may run the calculation on the Petersen graph by entering $T[Pet]$.  Both $Pet$ and $J3$ will run (almost) instantaneously on modern hardware. We see the biggest improvement in time using \Cref{thm:PKEqualsT} with the blowup of the Petersen graph, $PetBU$. The old method (calculating several filtered $n$-color homologies) took over a week of computation to calculate, while $T(PetBU)$ using the PD notation above takes a little over 9 hours on an M2 Pro MacBook. 

\newpage

\noindent{Mathematica Code:}\\

\noindent \verb$rule0 = {V[x_, y_] :> (2 Node[x, y] - 1)};$\\
\verb$rule1 = {M[a_, b_, c_, d_] :> arc[a, d] arc[b, c] - arc[a, c] arc[b, d]};$\\
\verb$rule2 = {Node[x__] Node[y__] :> Node @@ Union[List[x], List[y]] /;$\\
\verb$Intersection[List[x], List[y]] != {}, arc[a_, b_] arc[b_, c_] Node[x__] :> $\\
\verb$arc[a, c] (Node[x] /. b :> Min[a, c] // DeleteDuplicates) /; $\\
\verb$MemberQ[List[x], b], arc[a_, b_] arc[c_, b_] Node[x__] :> $\\
\verb$arc[a, c] (Node[x] /. b :> Min[a, c] // DeleteDuplicates) /; $\\
\verb$MemberQ[List[x], b], arc[b_, a_] arc[b_, c_] Node[x__] :> $\\
\verb$arc[a, c] (Node[x] /. b :> Min[a, c] // DeleteDuplicates) /; $\\
\verb$MemberQ[List[x], b], arc[b_, a_] arc[c_, b_] Node[x__] :> $\\
\verb$arc[a, c] (Node[x] /. b :> Min[a, c] // DeleteDuplicates) /; $\\
\verb$MemberQ[List[x], b]};$\\
\verb$rule3 = {arc[a_, b_] arc[b_, c_] :> arc[a, c], arc[a_, b_] $\\
\verb$arc[c_, b_] :> arc[a, c], arc[b_, a_] arc[b_, c_] :> arc[a, c], $\\
\verb$arc[b_, a_] arc[c_, b_] :> arc[a, c]};$\\
\verb$rule4 = {(arc[a_, b_])^2 Node[x__] :> c[a] (Node[x] /. b :> a $\\
\verb$// DeleteDuplicates) /; MemberQ[List[x], b] && a < b,$\\
\verb$(arc[a_, b_])^2 Node[x__] :> c[b] (Node[x] /. a :> b $\\
\verb$// DeleteDuplicates) /; MemberQ[List[x], a] && b < a, $\\
\verb$Node[x__] Node[y__] :> Node @@ Union[List[x], List[y]] /;  $\\
\verb$Intersection[List[x], List[y]] != {}};$\\
\verb$rule5 = { (arc[a_, b_])^2 :> c[Min[a, b]], arc[a_, a_] :> c[a]};$\\
\verb$rule6 = {Node[x__]^m_ :> Node[x]};$\\
\verb$rule7 = {Node[x__] :> n (Product[c[List[x][[i]]]^(-1), {i, 1,$\\
\verb$Length[List[x]]}])};$\\
\verb$rule8 = {c[a_] :> n};$\\
\verb$T[t_] := Simplify[((Product[t[[i]], {i, 1, Length[t]}] /. rule0 // $\\
\verb$Expand) /. rule6 /. rule1 // Expand) //. rule2 //. rule3 //. $\\
\verb$rule4 //. rule5 /. rule6 //. rule7 //. rule8];$\\

\end{document}